\theoremstyle{definition}
\newtheorem{thm}{Theorem}
\newtheorem{prop}[thm]{Proposition}
\newtheorem{lem}[thm]{Lemma}
\newtheorem{dfn}[thm]{Definition}
\newtheorem{rem}[thm]{Remark}
\newcommand{\R}{\mathbf{R}}
\newcommand{\Z}{\mathbb{Z}}
\def\Mod{\operatorname{Mod}}
\begin{document}


\title[Right-left equivalent maps of simplified $(2, 0)$-trisections]{Right-left equivalent maps of simplified $(2, 0)$-trisections with different configurations of vanishing cycles}
\author{Nobutaka Asano}
\address{Mathematical Institute, Tohoku University, Sendai, 980-8578, Japan}
\email{nobutaka.asano.r4@dc.tohoku.ac.jp}

\begin{abstract}
Trisection maps are certain stable maps from closed $4$--manifolds to $\R^2$.
A simpler but reasonable class of trisection maps was introduced by Baykur and Saeki, called a simplified $(g, k)$-trisection.
We focus on the right-left equivalence classes of simplified $(2, 0)$-trisections.
Simplified trisections are determined by their simplified trisection diagrams, which are diagrams on a genus-$2$ surface consisting of simple closed curves of vanishing cycles with labels.
The aim of this paper is to study how the replacement of reference paths changes simplified trisection diagrams up to upper-triangular handle-slides.
We show that for a simplified trisection $f$ satisfying a certain condition, there exists at least two simplified $(2, 0)$-trisections $f'$ and $f''$ such that $f, f'$ and $f''$ are right-left equivalent to each other but their simplified trisection diagrams are not related by automorphisms of a genus-$2$ surface and upper-triangular handle-slides.
\end{abstract}
\maketitle

\section{Introduction}
A trisection map is a certain stable map from a connected closed orientable smooth $4$--manifold to the $2$--dimensional plane.
The singular value set of a stable map of a trisection is the union of immersed circles with cusps as in Figure~\ref{tri}, where the singular value set in the white boxes consists of immersed curves with only normal double points, without cusps and not to be in contact with radial directions. 
Such a stable map is called a trisection map.
Trisection maps are introduced by Gay and Kirby \cite{Kirby}.
Trisecting the target of a trisection map, we obtain a handle decomposition of the source $4$--manifold, which is a $4$-dimensional analog of a Heegaard splitting of a $3$-manifold.
On the other hand, a trisection map can be seen as a specific description of a $4$--manifold in terms of stable maps.
As a study of stable maps, it is natural to ask when two trisection maps are equivalent as stable maps.
The fiber $\Sigma$ over the center point $p_0$ in Figure~\ref{tri} is a closed orientable surface of genus $g$, and this has the highest genus among all regular fibers of the trisection map. 
Set $k$ to be the number of arcs in Figure~\ref{tri} connecting neighboring white boxes without cusps. 
In this case, the trisection is called a $(g, k)$-trisection.
The vanishing cycles of indefinite folds of a trisection map can be represented by simple closed curves on the central fiber $\Sigma$. 
The vanishing cycles of a trisection determine the source $4$-manifold up to diffeomorphism. 
The surface $\Sigma$ with these simple closed curves is called a trisection diagram.

\begin{figure}[htbp]
\begin{center}
\includegraphics[clip, width=5cm, bb=152 424 441 713]{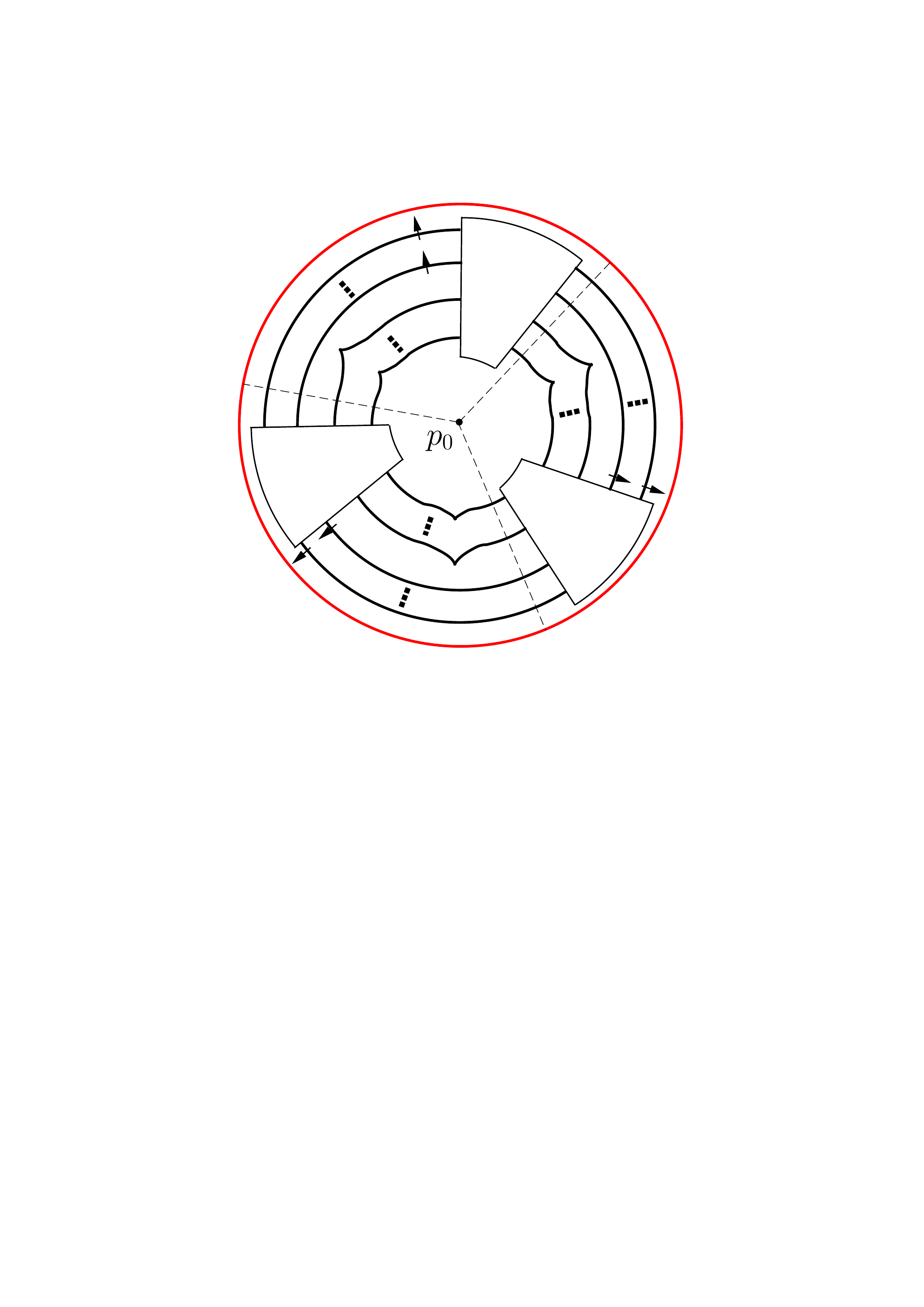}
\caption{A singular value set of a trisection map.}
\label{tri}
\end{center}
\end{figure}

In \cite{Kirby}, it is proved that two trisection diagrams of the same $4$--manifold are related by automorphisms of the central fiber, handle-slides and stabilizations.
The classification of trisection diagrams on a genus-$1$ surface up to handle-slides is easy, and that on a genus-$2$ surface up to handle-slides had been done by Meier and Zupan in \cite{MZ}. 
The genus-$g$ case for $g \geq 3$ is still difficult.

Simplified trisections are specific trisection maps whose singular value sets have no normal double points in the white boxes in Figure~\ref{tri}, which is introduced by Baykur and Saeki~\cite{BS2}.
They proved the existence of simplified trisections from simplified broken Lefschetz fibrations on closed $4$--manifolds by using certain singularity theoretical technique. 
A difference between trisection maps and simplified ones can be seen in the usage of handle-slides (c.f. \cite[Remark 7.2]{BS}).

The classification of $4$--manifolds admitting simplified genus-$2$ trisections follows from the Meier-Zupan's classification of genus-$2$ trisections \cite{MZ} and Baykur and Saeki's construction of simplified trisections \cite{BS}. 
Hayano gave a short proof of this classification, which can be attributed to a problem in linear algebra by deforming simplified $(2, 0)$-trisections \cite{hayano}.
The auther focused on 3-dimensional manifolds, called {\it vertical $3$-manifolds}, obtained as the preimages of properly embedded arcs in the target of a simplified $(2, 0)$-trisection. 
He classified the vertical $3$-manifolds of simplified genus-$2$ trisections and studied if they determine the source $4$-manifold. 

To see configurations of vanishing cycles on the central fiber from a viewpoint of stable maps, we need to fix a reference path in the target space.
In \cite{hayano}, a specific reference path is chosen, which we call the {\it standard reference path} in this paper.
For a simplified $(g, k)$-trisection, there are $3(g - k)$ simple closed curves of vanishing cycles on the central fiber, each of which corresponds to a connected component of the image of indefinite folds adjacent to the cusps, and $k$ simple closed curves of vanishing cycles, each of which corresponds to a round circle in the image of indefinite folds. 
The $3(g - k )$ simple closed curves are labeled with $\{ a_1, a_2, \ldots, a_{g-k} \}, \{ b_1, b_2, \ldots, b_{g-k} \}, \{ c_1, c_2, \ldots, c_{g-k} \}$ and the $k$ simple closed curves are labeled with $\{ a_{g-k+1}, a_{g-k+2}, \ldots a_{g}\}$.
In this paper, we call the genus-$g$ surface with these simple closed curves with labels a {\it simplified trisection diagram}.
It is observed in \cite{BS, hayano} that the isotopy class of each of the vanishing cycles $a_i, b_i$ and $c_i$ is well-defined up to handle-slides along $a_1, a_2, \ldots, a_{i - 1}$.
These handle-slides are called {\it upper-triangular handle-slides}.

In this paper, we focus on the classification of right-left equivalence classes of simplified $(2, 0)$-trisections.
We can see that if simplified trisection diagrams of simplified $(2, 0)$-trisections are not related by automorphisms of the central fiber $\Sigma$, upper-triangular handle-slides over $a_1$, and replacing reference paths, then they are not right-left equivalent (Lemma~\ref{differentRightLeftEq}).
A handle-slide is a certain systematic operation for vanishing cycles on a simplified trisection diagram.
On the other hand, the replacement of reference paths changes the configurations of vanishing cycles drastically.
The aim of this paper is to study how the replacement of reference paths changes simplified trisection diagrams up to upper-triangular handle-slides.

Let $\Sigma_{a_1}$ denote the torus obtained from the central fiber $\Sigma$ of a simplified $(2, 0)$-trisection by surgering it along the simple closed curve $a_1$ on $\Sigma$.
Here ``surgery'' means ``cut $\Sigma$ along $a_1$ and fill the appearing two boundary components by disks''.
Let $\mu_1$ be the monodromy along a circle between two three-cusped circles of the singular value set, which is an automorphism of $\Sigma_{a_1}$.
If $\mu_1$ is the identity map, then the trisection is in some sense ``trivial'', see \cite{asano}.
On the other hand, we can find non-trivial replacements of reference paths when $\mu_1$ is not the identity map.
Let $a'_2, b'_2$ and $c'_2$ be the vanishing cycles on $\Sigma_{a_1}$ obtained from $a_2, b_2$ and $c_2$ by the surgery from $\Sigma$ to $\Sigma_{a_1}$, respectively.

The main theorem is the following:


\begin{thm}
Let $f : X \to \R^2$ be a simplified $(2, 0)$-trisection. 
Suppose that the following holds:
	\begin{itemize}
		\item The monodromy $\mu_1$ is not the identity map. 
		\item $b'_2$ and $c'_2$ are not parallel. 
		\item $a'_2$ and $\mu_1^{-1}(c'_2)$ are not parallel.
	\end{itemize}
Then there exists at least two simplfied $(2, 0)$-trisections $f'$ and $f''$ such that $f, f'$ and $f''$ are right-left equivalent to each other but their simplified trisection diagrams are not related by automorphisms of $\Sigma$ and upper-triangular handle-slides over $a_1$.
\end{thm}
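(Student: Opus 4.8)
The plan is to produce $f'$ and $f''$ by replacing the standard reference path of $f$ with two other reference paths, and then to separate the three resulting simplified trisection diagrams by a topological invariant read off on the surgered torus $\Sigma_{a_1}$. Since a replacement of reference paths does not alter the underlying stable map $f$ but only the curves one records from it, any simplified $(2,0)$-trisection reconstructed (via the Baykur--Saeki / Hayano correspondence between diagrams and maps) from such a diagram is right-left equivalent to $f$; that replacing reference paths preserves the right-left equivalence class is built into the framework and is precisely one of the moves appearing in Lemma~\ref{differentRightLeftEq}. Thus the \emph{existence} and \emph{right-left equivalence} of $f'$ and $f''$ is the easy half, and the entire content of the theorem is the \emph{non}-equivalence of the three diagrams under automorphisms of $\Sigma$ together with upper-triangular handle-slides over $a_1$.

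Second, I would reduce the problem from $\Sigma$ to the torus $\Sigma_{a_1}$. On $\Sigma_{a_1}$ the curve $a_1$ bounds a disk, so a band-sum with a parallel copy of $a_1$ is isotopic to the original curve; hence every upper-triangular handle-slide over $a_1$ acts trivially on isotopy classes on $\Sigma_{a_1}$. The only remaining freedom is the action of those automorphisms of $\Sigma$ fixing $a_1$, and each such automorphism descends (after capping the surgery disks) to an automorphism of $\Sigma_{a_1}$. Any such automorphism preserves the geometric intersection number $i(\,\cdot\,,\,\cdot\,)$ of curves on $\Sigma_{a_1}$ and preserves the relation of being parallel, i.e. $i=0$. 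It therefore suffices to attach to each of the three diagrams a labelled triple of curves on $\Sigma_{a_1}$ and to show that their profiles of pairwise intersection numbers are pairwise distinct: no automorphism can then match the triples.

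Third, I would compute the effect of the two reference-path replacements on the vanishing cycles. Dragging a reference path across the annular region between the two three-cusped circles inserts the monodromy $\mu_1$ on the cycles carried across it, so the triple $(a'_2,b'_2,c'_2)$ associated with $f$ is carried to triples obtained by applying $\mu_1^{\pm1}$ to a prescribed sub-collection of $\{a'_2,b'_2,c'_2\}$. The three hypotheses are tailored to this computation: $\mu_1\neq\id$ guarantees that the inserted monodromy genuinely moves some cycle, so the new triples are not literally the old one; the non-parallelism of $b'_2$ and $c'_2$ records a nonzero entry $i(b'_2,c'_2)\neq 0$ in the profile belonging to $f$; and the non-parallelism of $a'_2$ and $\mu_1^{-1}(c'_2)$ records a nonzero entry $i(a'_2,\mu_1^{-1}(c'_2))=i(\mu_1(a'_2),c'_2)\neq 0$ appearing in the profile of one of the new diagrams. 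Comparing the three intersection-number profiles and showing they are pairwise distinct then completes the argument.

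The main obstacle is the third step. I must pin down \emph{exactly} which sub-collection of $\{a'_2,b'_2,c'_2\}$ receives $\mu_1^{\pm1}$ under each admissible reference-path replacement --- this is the combinatorial heart of how the replacement of reference paths changes a simplified trisection diagram --- and then verify that the three resulting profiles cannot be matched by \emph{any} element of the image of $\operatorname{Aut}(\Sigma;a_1)$ in $GL(2,\Z)$. Because the allowed automorphisms act \emph{diagonally} on the whole triple, separating the diagrams is strictly stronger than observing that individual intersection numbers are nonzero: one must check that the (label-respecting) collection of pairwise intersection numbers genuinely separates the $GL(2,\Z)$-orbits. Establishing that hypotheses on $b'_2,c'_2$ and on $a'_2,\mu_1^{-1}(c'_2)$ are \emph{sufficient} for this separation, rather than merely necessary, is where the real work lies, and it is presumably why those two specific non-parallelism conditions are imposed.
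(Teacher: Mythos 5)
Your overall architecture matches the paper's: the right-left equivalence of $f$, $f'$, $f''$ comes for free from replacing reference paths, and the entire content is separating the three resulting diagrams by an intersection-theoretic invariant computed on $\Sigma_{a_1}$, where handle-slides over $a_1$ become isotopies. But the proposal stops exactly at the step you yourself call ``the real work,'' and that step is the theorem. Two things are missing. First, you never pin down the move and its effect: the operative move is the $\Delta_2$-move, whose induced map on diagrams is $\sigma_2(\Sigma;\{a_1,a_2\},\{b_1,b_2\},\{c_1,c_2\})=(\Sigma;\{a_1,b_2\},\{b_1,\mu_1^{-1}(c_2)\},\{c_1,a_2\})$, together with Lemma~\ref{order3} showing that $\sigma_2^3$ acts trivially modulo $\Mod(\Sigma)$. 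Second, and more seriously, the invariant you choose --- the profile of pairwise geometric intersection numbers among the stage-two cycles on $\Sigma_{a_1}$ --- is not shown to separate the three labelled triples $(a'_2,b'_2,c'_2)$, $(b'_2,\mu_1^{-1}(c'_2),a'_2)$, $(\mu_1^{-1}(c'_2),\mu_1^{-1}(a'_2),b'_2)$. Since $\mu_1^{-1}$ is a nontrivial power of $t_d$, your profile entangles the unknown monodromy with every entry, so ``the three profiles are pairwise distinct'' is a claim that still requires a full case analysis, and nothing in the two non-parallelism hypotheses obviously implies it.

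The paper's resolution is to pair against the fixed class $[d]=[b_1'+c_1']$ instead of pairing the stage-two cycles with each other (Lemma~\ref{d}): the invariant $I(V)=\bigl(|(b_1+c_1)\cdot a_2|,\,|(b_1+c_1)\cdot b_2|,\,|(b_1+c_1)\cdot c_2|\bigr)$ is unchanged by upper-triangular handle-slides (Lemma~\ref{invariantForHS}) \emph{and} by $\mu_1^{\pm1}=t_d^{\pm k}$, because $d\cdot t_d^{\pm k}(w)=d\cdot w$. Consequently $I(\sigma_2^{n}V)$ is literally the $n$-fold cyclic shift of $I(V)$, and the whole problem collapses to showing that the three entries of $I(V)$ are not all equal. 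That last point is \emph{not} formal: it is verified case by case against the classification of $6$-tuples in Theorem~\ref{thmA}, using the explicit homology coordinates of $a_2',b_2',c_2',d$ on $\Sigma_{a_1}$ from \cite{asano}, and the two non-parallelism hypotheses are precisely what excludes the degenerate values (e.g.\ $|1\mp q|=1$ in the first family) at which the cyclic shifts coincide. Without choosing an invariant that is blind to $\mu_1$, and without this explicit verification, your argument does not close.
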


It will be proved that any reference path can be obtained from the standard one by applying  two kinds of operations called a $\Delta_1$-move and a $\Delta_2$-move successively up to isotopy on $\R^2$ fixing the singular value set.
The set of reference paths has a structure of a group generated by these operations.
$\Delta_2$-moves induce an action of $\Z / 3\Z$ to the set of configurations of vanishing cycles modulo automorphisms of $\Sigma$ and upper-triangular handle-slides.
If the monodromy $\mu_1$ is not the identity map, then we may prove that the action induced by $\Delta_2$-moves is non-trivial, especially.
This is the strategy of our proof of the main theorem.

The author would like to thank Masaharu Ishikawa for many discussions and encouragement. He would also like to thank Kenta Hayano for giving me some useful suggestions early in this study. This work is supported in part by the WISE Program for AI Electronics, Tohoku University.
\section{Preliminaries}
Let $f$ and $g$ be smooth maps from an $m$-dimensional manifold $M$ to an $n$-dimensional manifold $N$. The maps $f$ and $g$ are said to be right-left equivalent if there exist self-diffeomorphisms $\phi$ on $M$ and  $\psi$ on $N$ such that the diagram
$$
\begin{CD}
M @> \phi >> M\\
@VfVV @VVgV \\
N @ > \psi >> N
\end{CD}
$$
is commutative.
A smooth map $f$ is called a stable map if there exists a neighborhood $U$ of $f$ in the space of smooth maps from $M$ to $N$, with the Whitney $C^{\infty}$-topology, such that any map $g \in U$ is right-left equivalent to $f$.
 
Let $X$ be a closed orientable smooth $4$--manifold and $f : X \to \R^2$ be a stable map.
Singularities of $f$ are classified into four types: definite folds, indefinite folds, definite cusps and indefinite cusps.
The image of indefinite folds is an immersed curve on $\mathbf{R}^2.$ 
For an indefinite fold $x$, choose a short arc on $\R^2$ that intersects the image of indefinite folds transversely at $f(x)$. 
The fiber changes along this short arc as shown on the left in Figure~\ref{singularities}. 
The image of definite folds is also an immersed curve on $\mathbf{R}^2$ and the fiber changes along a transverse short arc as shown on the middle. 
Cusps appear at the endpoints of folds. The singular value set near the image of each cusp is a cusped curve as shown on the right.

\begin{figure}[htbp]
\begin{center}
\includegraphics[clip, width=15cm, bb=129 591 500 713]{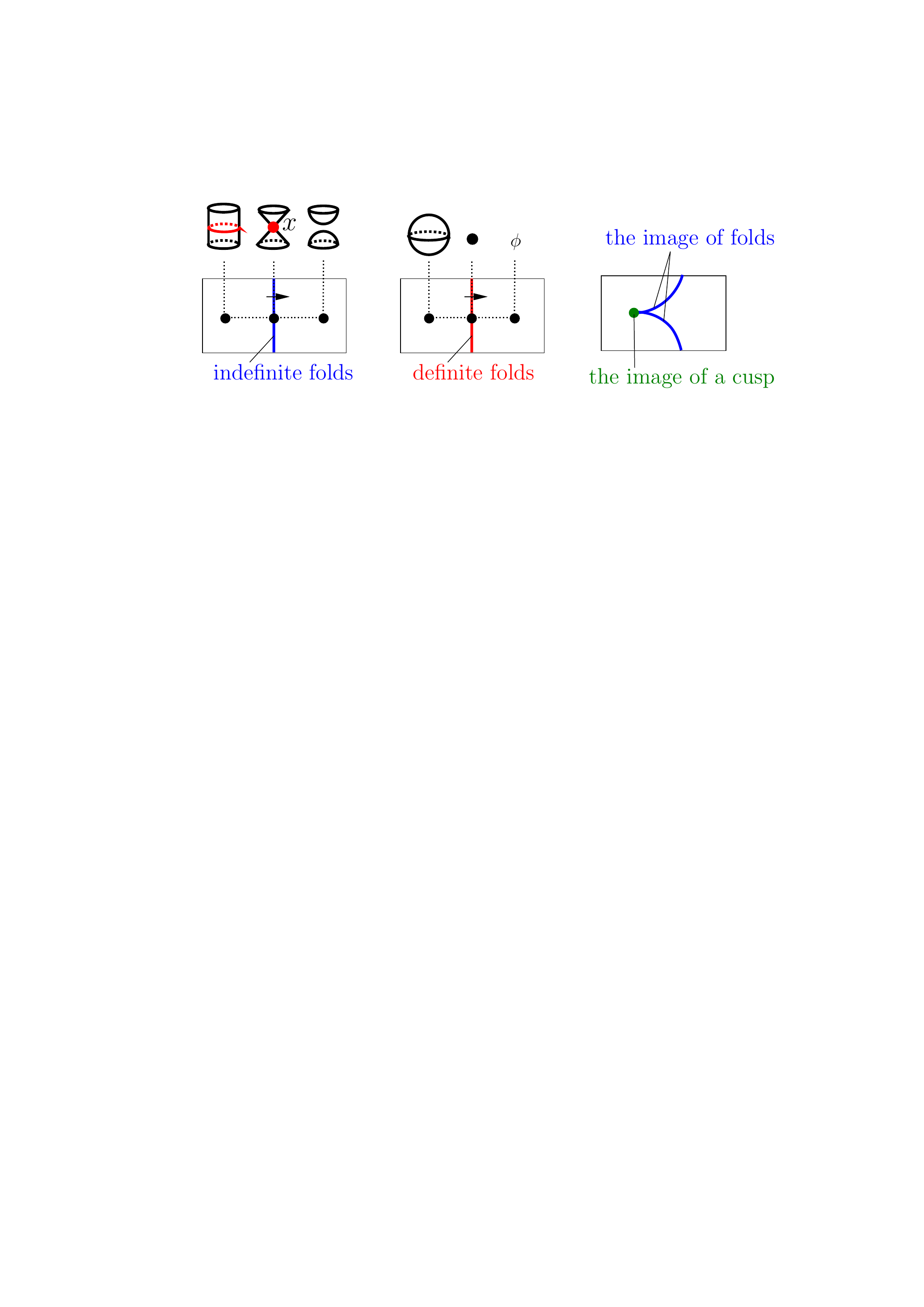}
\end{center}
\caption{Deformation of fibers near singularities.}
\label{singularities}
\end{figure}

\begin{dfn} 
A stable map $f : X \to \mathbf{R}^2$ is called a simplified $(g, k)$-trisection if the following conditions hold:
\begin{itemize}
\item The singular value set of definite folds is a circle, bounding a disk $D^2$.
\item The closure of the singular value set of indefinite folds consists of $g$ concentric circles on $D^2$ each of whose inner $g-k$ circles has three outward cusps.
\item The preimage of the point at the center is a closed orientable surface of genus $g$.
\end{itemize}
See Figure~\ref{sim_tri_2}.
\end{dfn}

\begin{figure}[htbp]
\begin{center}
\includegraphics[clip, width=5.0cm, bb=274 499 437 662]{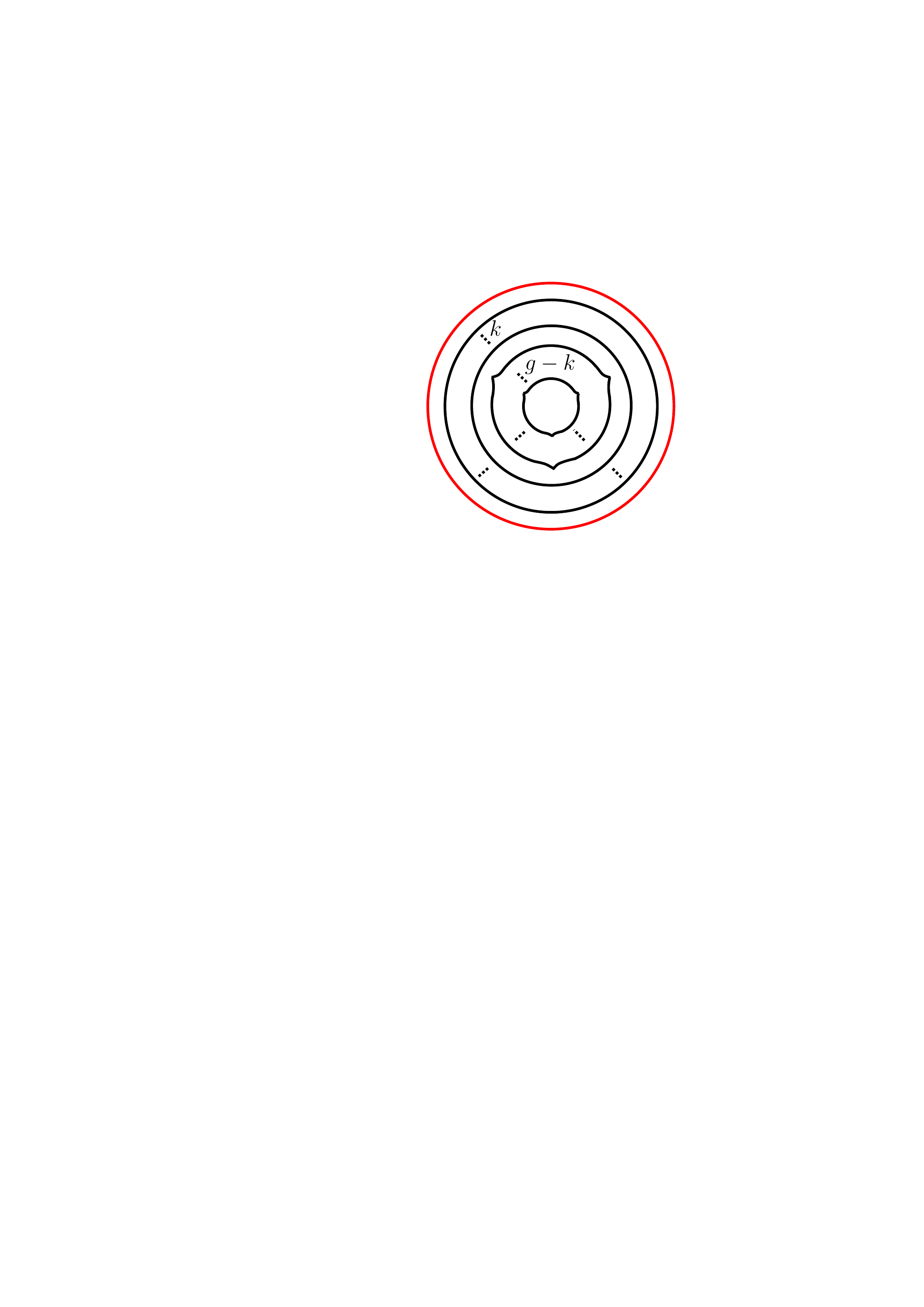}
\end{center}
\caption{A simplified $(g, k)$-trisection.}
\label{sim_tri_2}
\end{figure}

\begin{dfn}
A tree in $\R^2$ shown by dotted segments in Figure~\ref{refPath} is called the {\it standard reference path}. 
\end{dfn}

\begin{figure}[htbp]
\begin{center}
\includegraphics[clip, width=8.0cm, bb=129 377 465 712]{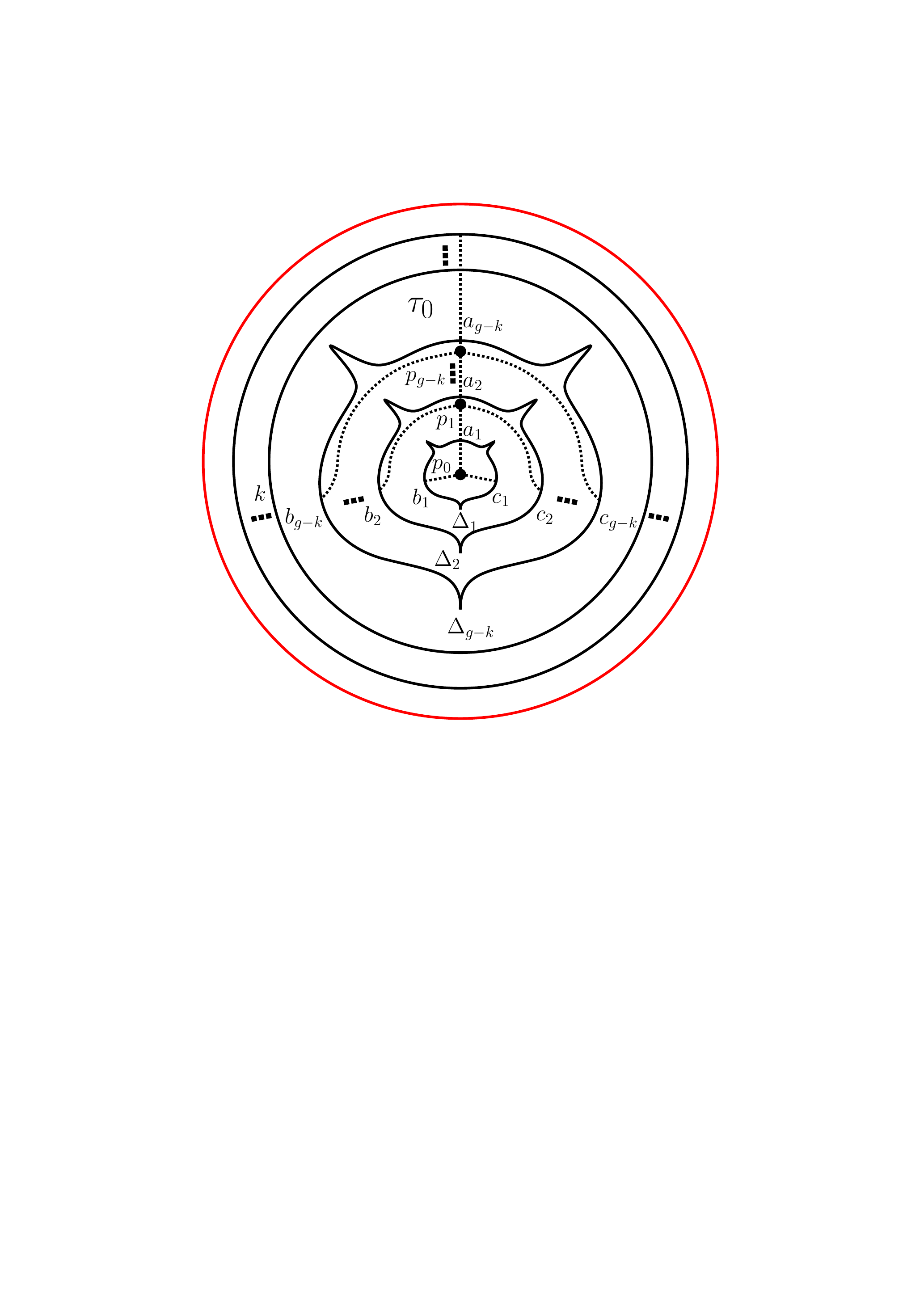}
\end{center}
\caption{The standard refernce path of a simplified $(g, k)$-trisection.}
\label{refPath}
\end{figure}


Let $f:X\to \R^2$ be a simplified $(g,k)$--trisection. 
The image of indefinite folds has $g - k$ three-cusped circles.
We denote them by $\Delta_1, \Delta_2, \ldots, \Delta_{g-k}$ from the innermost one in order.
Let $p_0$ be the center of $D^2$.
Let $\tau_0$ be the standard reference path and $p_i$ be the four-valent vertex of $\tau_0$ between $\Delta_i$ and $\Delta_{i+1}$ for $i = 1, 2, \ldots, g - k - 1$.
We assign the labels $a_i, b_i$ and $c_i$ to the intersection points of $\tau_0$ and $\Delta_i$ for $i = 1, 2, \ldots, g - k$ as follows:
	\begin{itemize}
		\item The intersection point of $\Delta_1$ and the edge of $\tau_0$ connecting $p_0$ and $p_1$ is labeled with $a_1$.
		\item The other two intersection points of $\Delta_1$ and $\tau_0$ are labeled with $b_1$ and $c_1$ such that $a_1, b_1, c_1$ are aligned counter-clockwise along $\Delta_1$. 
		\item For each $i = 2, \ldots, g -k$, the three intersection points of $\Delta_i$ and $\tau_0$ are labeled with $a_i, b_i, c_i$ such that  $a_i, b_i, c_i$ are aligned counter-clockwise along $\Delta_i$ 
		and the edge of $\tau_0$ connecting $p_{i-2}$ and $p_{i-1}$ is between the edges of $\tau_0$ adjacent to the points labeled with $b_i$ and $c_i$.
	\end{itemize}

We denote the fiber on the center $p_0$ by $\Sigma$. 
If one moves from $p_0$ to $a_1$ on $\tau_0$, then a simple closed curve on $\Sigma$ shrinks to a point. 
This simple closed curve is called a vanishing cycle associated with $\tau_0$.
For simplicity, we denote this vanishing cycle on $\Sigma$ again by $a_1$.
We may define the vanishing cycles for $b_1$ and $c_1$ similarly and we denote them again by $b_1$ and $c_1$, respectively.
Let $\Sigma_{a_1}$ denote the closed surface obtained from $\Sigma$ by surgering it along the simple closed curve $a_1$ on $\Sigma$.
Here ``surgery'' means ``cut $\Sigma$ along $a_1$ and fill the appearing two boundary components by disks''.
The surface $\Sigma_{a_1}$ is the fiber over the point $p_1$ in Figure \ref{refPath}.

If one moves from $p_0$ to $a_2$ on $\tau_0$, the surface $\Sigma$ first changes to $\Sigma_{a_1}$ and then a simple closed curve $\gamma$ on $\Sigma_{a_1}$ shrinks to a point $x$. 
We want to consider a simple closed curve $\tilde \gamma$ on $\Sigma$ that corresponds to $\gamma$ by the surgery from $\Sigma$ to $\Sigma_{a_1}$.
However if an isotopy of $\gamma$ on $\Sigma_{a_1}$ passes through the two disks of the surgery on $\Sigma_{a_1}$, then handle-slides along the vanishing cycle $a_1$ on $\Sigma$ are applied to $\tilde \gamma$.
Therefore we may only define the vanishing cycle $\tilde \gamma$ for $a_2$ on $\Sigma$ up to handle-slides along $a_1$.
For simplicity, we denote this vanishing cycle again by $a_2$.
We define the vanishing cycles $b_2$ and $c_2$ similarly.

Continuing the same observation, we may define the vanishing cycles $a_i, b_i, c_i$ on $\Sigma$ for $i = 3, \cdots, g - k$ up to isotopies and handle-slides over $a_1,\ldots,a_{i-1}$. 
These handle-slides are called upper-triangular handle-slides, see \cite[Remark 7.2]{BS}.

The advantage of considering the standard reference path is that we may study the global monodromies along  simple closed curves on the target space $\R^2$ up to upper-triangular handle-slides.
For example, Hayano proved the following lemma.
Let $\mu_i$ be the monodromy diffeomorphism from the fiber over $p_i$ to itself along a simple closed curve between $\Delta_{i}$ and $\Delta_{i+1}$, starting from $p_i$, parallel to these cusped curves and oriented counter-clockwise.

\begin{lem}[Hayano, \cite{hayano}]\label{monodromy}

Let $f:X\to \R^2$ be a simplified $(g, k)$-trisection and $i \in \{ 1, 2, \ldots, g - k \}$.
Suppose that $\mu_{i - 1}$ is trivial if $i > 1$.

\begin{enumerate}

\item 
Using isotopy of $a_i, b_i$ and $c_i$, we may assume that a regular neighborhood of $a_i \cup b_i \cup c_i$ is a genus--$1$ surface with three boundary components. 

\item 
The monodromy $\mu_{i}$ is $t_{\delta_1}^2t_{\delta_3}^2t_{\delta_2}^{-1}$ if we can take orientations of $a_{i}, b_{i}$ and $c_{i}$ so that the algebraic intersections $a_{i} \cdot b_{i}$, $b_{i} \cdot c_{i}$ and $c_{i} \cdot a_{i}$ are all equal to $1$ (see on the left in  Figure~\ref{genus1bd3}), and is equal to $t_{\delta_1}^{-2}t_{\delta_3}^{-2}t_{\delta_2}$ otherwise (see on the right), where $\delta_1, \delta_2$ and $\delta_3$ are the boundary components of the neighborhood of $a_{i}\cup b_{i}\cup c_{i}$ as in the figure and, for $j =1, 2, 3$, $t_{\delta_j}$ is the right-handed Dehn twist along $\delta_j$.

\end{enumerate}

\begin{figure}[htbp]
\begin{center}
\begin{minipage}{0.48\columnwidth}
\begin{center}
\includegraphics[clip, width=5.0cm, bb=217 606 376 711]{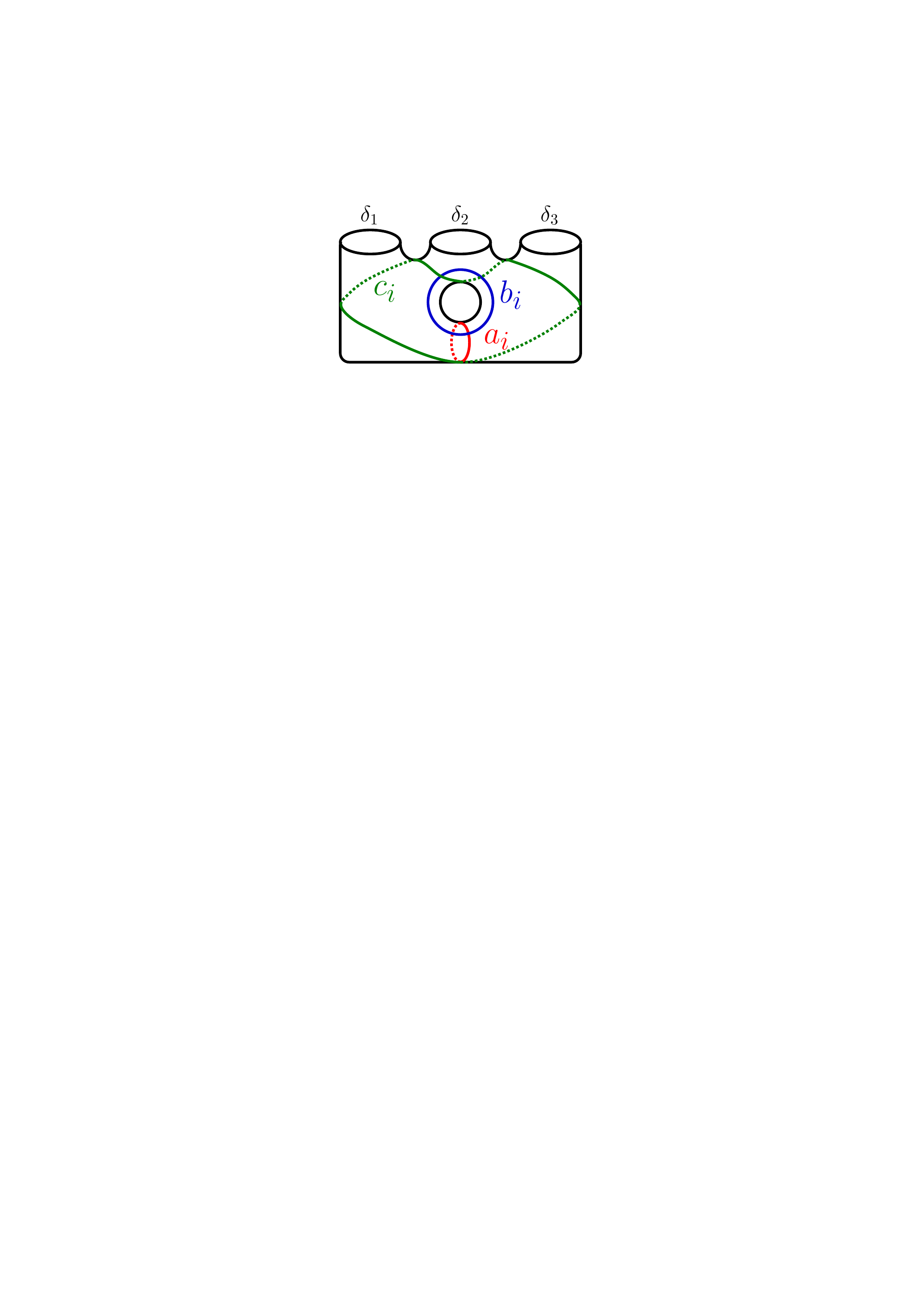}
\end{center}
\end{minipage}
\begin{minipage}{0.48\columnwidth}
\begin{center}
\includegraphics[clip, width=5.0cm, bb=217 606 376 711]{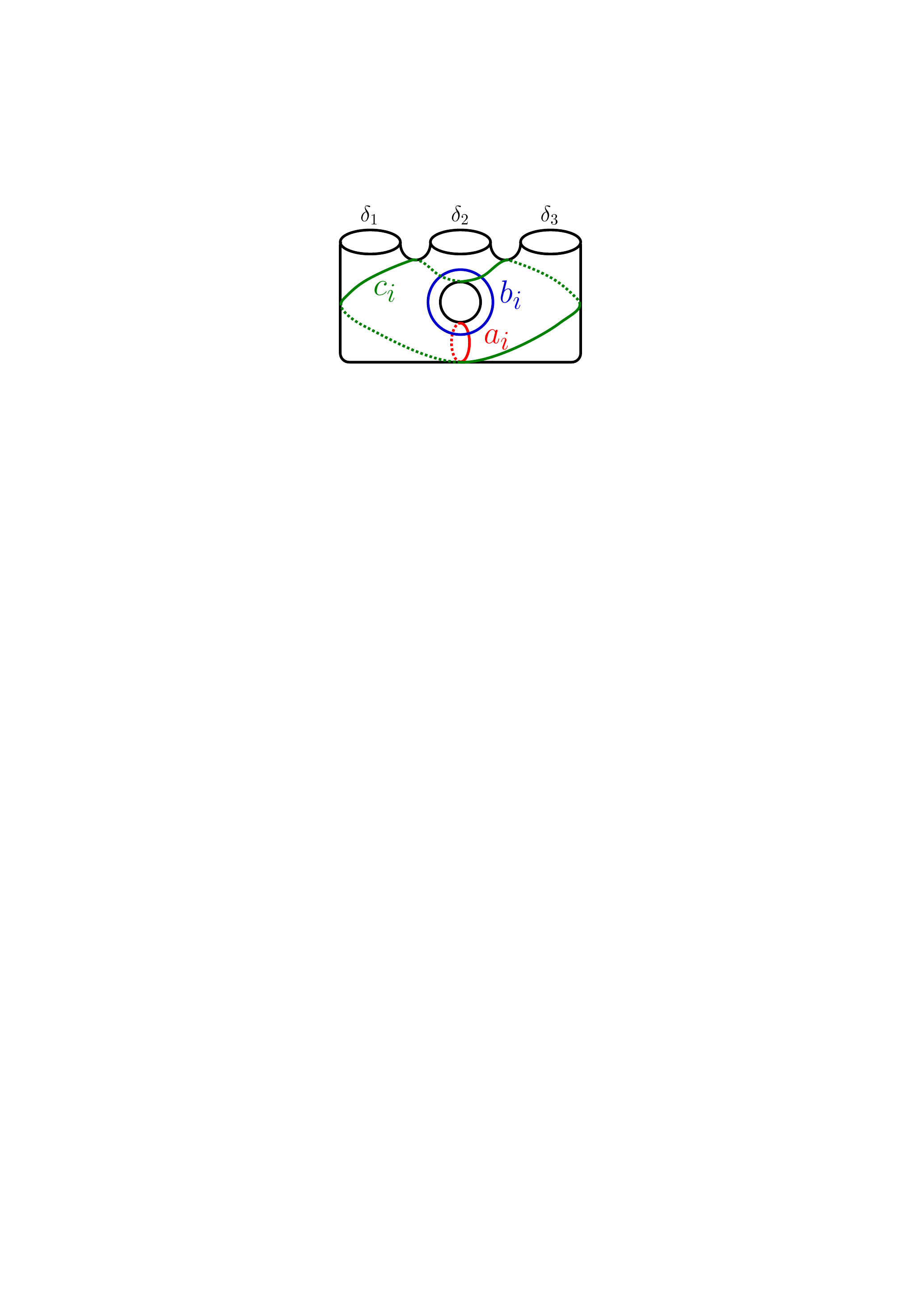}
\end{center}
\end{minipage}
\caption{Two possibilities of configurations of vanishing cycles of $f$. }
\label{genus1bd3}
\end{center}
\end{figure}
\end{lem}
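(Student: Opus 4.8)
The plan is to produce $f'$ and $f''$ by altering the reference path of $f$ through $\Delta_2$-moves, to record how one such move transforms the outer vanishing cycles on $\Sigma_{a_1}$, and then to distinguish the three resulting diagrams by an invariant that survives every automorphism admissible in the comparison.

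First I would settle the right-left equivalence. A $\Delta_2$-move is realised by an ambient isotopy of $\R^2$ fixing the singular value set of $f$ and advancing the reference path by one cusp around the outer three-cusped circle $\Delta_2$. Reading the vanishing cycles of $f$ along the advanced path yields a diagram, which I take to be the standard diagram of a simplified $(2,0)$-trisection $f'$; as it differs from the diagram of $f$ only by a change of reference path, $f'$ is right-left equivalent to $f$, and a second move produces such an $f''$. I would then compute the move on cycles: advancing by one cusp relabels $(a'_2,b'_2,c'_2)$ cyclically and, because re-standardising drags the edge through $p_1$ once around $\Delta_1$, decorates a single cycle by $\mu_1$, giving up to upper-triangular handle-slides over $a_1$
\[
(a'_2,b'_2,c'_2)\longmapsto(c'_2,\mu_1(a'_2),b'_2).
\]
Hence the diagrams of $f,f',f''$ carry the outer triples $(a'_2,b'_2,c'_2)$, $(c'_2,\mu_1(a'_2),b'_2)$, $(b'_2,\mu_1(c'_2),\mu_1(a'_2))$ on $\Sigma_{a_1}$, and a third move applies $\mu_1$ to every cycle, so the induced action is the promised $\Z/3\Z$. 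The hypotheses $b'_2\not\parallel c'_2$ and $a'_2\not\parallel\mu_1^{-1}(c'_2)$ are what keep these shifted triples admissible, so that $f'$ and $f''$ exist.

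The heart of the matter is the inequivalence, and I would first confine it to the torus $\Sigma_{a_1}$. Since a $\Delta_2$-move leaves the inner cycles untouched, all three diagrams share $a_1,b_1,c_1$; an automorphism of $\Sigma$ realising an equivalence must fix these and so descends to one of $\Sigma_{a_1}$ fixing the inner cycles $b'_1,c'_1$. By Lemma~\ref{monodromy} the monodromy $\mu_1$ is a product of Dehn twists along the boundary of a neighbourhood of $a_1\cup b_1\cup c_1$, hence is disjoint from the cores $b_1,c_1$ and fixes $b'_1$ and $c'_1$; because $\mu_1$ is not the identity on the torus, the classes $[b'_1]$ and $[c'_1]$ cannot be independent, so they span one line $\lambda$ that $\mu_1$ fixes. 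Every admissible automorphism fixes $b'_1$ and $c'_1$, hence fixes $\lambda$, and the upper-triangular handle-slides become isotopies after the surgery. The comparison therefore unfolds entirely inside the stabiliser of $\lambda$ in $\Mod(\Sigma_{a_1})$.

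To separate the diagrams I would use the shear-invariant $|x\cdot b'_1|$ (the absolute intersection number with $b'_1$): it is preserved by every automorphism fixing $\lambda$, while the $\Delta_2$-move only permutes its three values cyclically, so the labelled triples $\big(|a'_2\cdot b'_1|,|b'_2\cdot b'_1|,|c'_2\cdot b'_1|\big)$ attached to $f,f',f''$ are the three cyclic rotations of one another. Whenever these numbers are not all equal the diagrams are already distinguished; moreover the common value in the remaining case cannot be $0$, as that would force $b'_2$ and $c'_2$ both parallel to $\lambda$, contradicting $b'_2\not\parallel c'_2$. The main obstacle is exactly this degenerate case of a common positive value, where the coarse invariant is blind: there one must pass to the residue of the $b'_1$-coordinate modulo the common value and track both the shear $\mu_1$ and the sign ambiguity of the stabiliser. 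In that finer analysis the non-triviality of $\mu_1$ makes the shear act as a genuine translation of the residue, $b'_2\not\parallel c'_2$ rules out the coincidence of two consecutive diagrams, and $a'_2\not\parallel\mu_1^{-1}(c'_2)$ rules out the last; marshalling these three exclusions is where the real work resides.
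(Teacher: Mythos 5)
Your proposal does not address the statement it is supposed to prove. The statement in question is Hayano's lemma (Lemma~\ref{monodromy}): part (1) asserts that $a_i$, $b_i$ and $c_i$ can be isotoped so that a regular neighborhood of $a_i\cup b_i\cup c_i$ is a genus-$1$ surface with three boundary components, and part (2) gives an explicit Dehn-twist expression for the monodromy $\mu_i$, namely $t_{\delta_1}^{2}t_{\delta_3}^{2}t_{\delta_2}^{-1}$ or $t_{\delta_1}^{-2}t_{\delta_3}^{-2}t_{\delta_2}$ according to the two possible intersection patterns of the three cycles in Figure~\ref{genus1bd3}. Nothing in your text touches either assertion: there is no isotopy argument for the three vanishing cycles, no analysis of the local structure of $f$ near the three cusps and three fold arcs of $\Delta_i$, and no computation in the mapping class group of the genus-$1$ surface with three boundary components that could produce the stated twist word. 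What you wrote instead is a proof sketch of the paper's main theorem (Theorem 1) --- constructing $f'$ and $f''$ by $\Delta_2$-moves and separating the resulting diagrams by intersection-number invariants --- which the paper handles via Proposition~\ref{containOrder3}, Lemma~\ref{order3}, Lemma~\ref{invariantForHS} and Lemma~\ref{except}. That is a different statement.

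The mismatch is not merely one of emphasis; as a proof of Lemma~\ref{monodromy} your argument is circular. In your second paragraph you write that ``by Lemma~\ref{monodromy} the monodromy $\mu_1$ is a product of Dehn twists along the boundary of a neighbourhood of $a_1\cup b_1\cup c_1$,'' i.e., you invoke the very lemma you were asked to establish, and the rest of your argument (the action $(a'_2,b'_2,c'_2)\mapsto(b_2',\mu_1^{-1}(c_2'),a_2')$ under $\sigma_2$, the invariance of intersection numbers with $b_1'+c_1'$, the role of the non-parallelism hypotheses) likewise presupposes it. Note that the paper itself offers no proof of this lemma: it is quoted as a known result from \cite{hayano}. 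A genuine proof would have to identify the vanishing cycles attached to the cusped circle $\Delta_i$ --- consecutive cycles meeting transversely in one point, forcing the neighborhood of their union to be a genus-$1$ surface with three boundary components --- and then express the monodromy along a loop encircling $\Delta_i$ as the composition of the local monodromies contributed by the folds and cusps, verifying that this composition equals one of the two stated twist words depending on the sign of $(a_i\cdot b_i)(b_i\cdot c_i)(c_i\cdot a_i)$. None of that analysis appears in your proposal.
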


 In this paper, we mainly study simplified $(2, 0)$-trisections. 
In this case, the fiber $\Sigma$ over the center $p_0$ is a genus-$2$ surface.
We may isotope $a_1, b_1$ and $c_1$ so that they intersect each other exactly once transversly as in Figure~\ref{genus1bd3}.
In this setting, $\Sigma$ is obtained from a regular neighborhood of $a_1 \cup b_1 \cup c_1$ by attaching either
\begin{itemize}
	\item two disks and one genus-$1$ surface with one boundary component or
	\item one disk and one annulus $A$.
\end{itemize}
In the first case, $\mu_1$ is the identity map of $\Sigma_{a_1}$ since all of $\delta_1, \delta_2$ and $\delta_3$ in Lemma~\ref{monodromy} are null-homotopic on $\Sigma_{a_1}$.
In the second case, let $d$ be a simple closed curve on $\Sigma_{a_1}$ obtained from the core of the annulus $A$ on $\Sigma$ by the surgery from $\Sigma$ to $\Sigma_{a_1}$.  
By Lemma \ref{monodromy}, the monodromy $\mu_1$ is either $t^{\pm 1}_d$ or $t^{\pm 4}_d$, where $t_d$ is the right-handed Dehn twist along $d$. 


Next, we introduce $6$-tuples of vertical $3$-manifolds and a classification of $6$-tuples shown in \cite{asano}.
Let $f : X \to \R^2$ be a simplified $(2, 0)$-trisection. 
Let $\gamma_{aa}$ be a properly embedded arc on $D^2 ( = f(X))$ that intersects $\Delta_1 \cup \Delta_2$ only at two points on the edge $e_a$ and separates $e_b \cup e_c$ and $\Delta_1$, where $e_a, e_b$ and $e_c$ are the edges on $\Delta_2$ in Figure~\ref{verTripleArc}.
The arcs $\gamma_{bb}, \gamma_{cc}$ are defined similarly. 
See the left figure in Figure~\ref{verTripleArc}.  
Let $\gamma_{ba}$ be a properly embedded arc on $D^2$ that intersects $\Delta_1 \cup \Delta_2$ twice,  at a point on $e_a$ and a point on $e_b$,  and separates $e_c$ and $\Delta_1$. 
The arcs $\gamma_{cb}, \gamma_{ac}$ are defined similarly. See the right figure in Figure~\ref{verTripleArc}. 
We set counter-clockwise orientations to these arcs. Set $V_{ij} = f^{-1}(\gamma_{ij})$ for $(i, j) \in \{ (a, a), (b, b), (c, c), (b, a), (c, b), (a, c) \}$ and set the orientations of $V_{ij}$ so that it coincides with the product of the orientations of the fiber and the arc $\gamma_{ij}$.
A $3$--manifold obtained as the preimage of an arc on $D^2$, as $V_{ij}$, is called a vertical $3$--manifold.
In \cite{asano} we focused on the $6$-tuples $\begin{pmatrix}
V_{aa} & V_{bb} & V_{cc}\\
V_{ba} & V_{cb} & V_{ac}
\end{pmatrix}$ of vertical $3$--manifolds to study simplified $(2, 0)$-trisections.

\begin{figure}[htbp]
\begin{center}
\includegraphics[clip, width=10cm, bb=129 533 517 703]{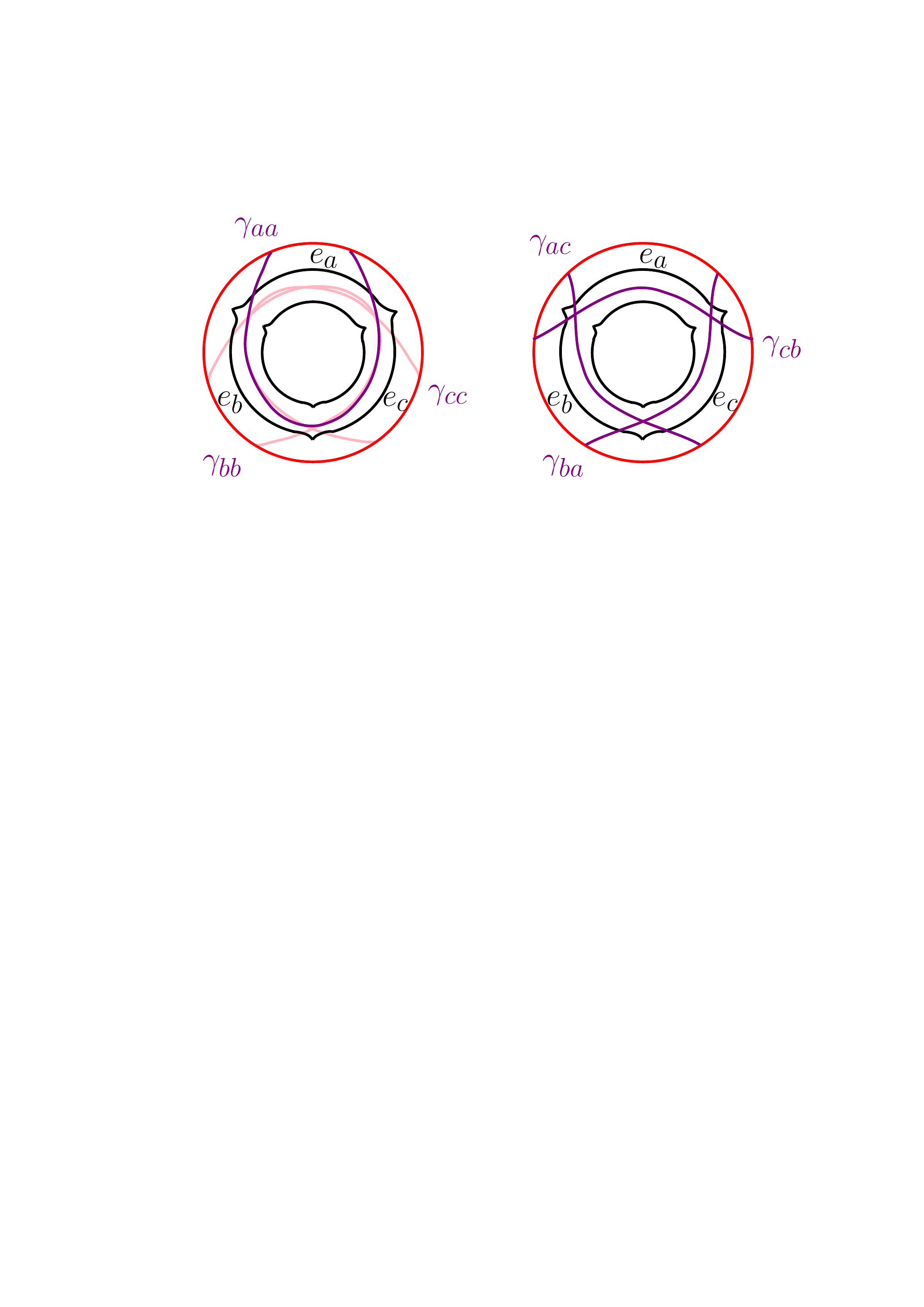}
\caption{The $6$-tuple of arcs.}
\label{verTripleArc}
\end{center}
\end{figure}

The orientation reversing diffeomorphism of $D^2$ sends 
the $6$-tuple $\begin{pmatrix}
V_{aa} & V_{bb} & V_{cc}\\
V_{ba} & V_{cb} & V_{ac}
\end{pmatrix}$ to  $\begin{pmatrix}
\bar{V}_{aa} & \bar{V}_{cc} & \bar{V}_{bb}\\
\bar{V}_{ac} & \bar{V}_{cb} & \bar{V}_{ba}
\end{pmatrix}$, where $\bar{V}_{ij}$ is the mirror image of $V_{ij}$. This operation corresponds to the exchange of the labels $b$ and $c$. 
We call it a reflection. 
Note that the reflection reverses the orientation of $X$ and those of the vertical $3$-manifolds in the $6$-tuple and, 
since we reverse the orientations of arcs so that they become counter-clockwise after the reflection, it does not change the orientations of the fibers.
The rotation of $D^2$ gives a cyclic permutation of the union of the arcs $\gamma_{ij}$ of order $3$, which maps $(a, b, c)$ to $(b, c, a)$.





The $6$-tuples are classified as follows:

\begin{thm}[\cite{asano}]\label{thmA}
The $6$-tuple $\begin{pmatrix}
V_{aa} & V_{bb} & V_{cc}\\
V_{ba} & V_{cb} & V_{ac}
\end{pmatrix}$ is one of the followings up to reflection and cyclic permutation:
\[
\begin{split}
&\begin{pmatrix}
S^1 \times S^2 & S^1 \times S^2 & S^1 \times S^2\\
S^3 & S^3 &S^3
\end{pmatrix}, \qquad 
\begin{pmatrix}
S^3 & S^3 & L((q - 1)^2, \epsilon q)\\
S^1 \times S^2 & L(q - 2, \epsilon) & L(q, -\epsilon)
\end{pmatrix}
, \\
&\begin{pmatrix}
S^3 & L(9, 2\epsilon) & L(4, \epsilon)\\
L(2, 1) & L(5, \epsilon) & S^3
\end{pmatrix}, \hspace{13.5mm}
\begin{pmatrix}
S^1 \times S^2 & L(4, 1) & L(4, 1)\\
S^3 & L(4 + \epsilon, 1) & S^3
\end{pmatrix}
, \\
&\begin{pmatrix}
S^1 \times S^2 & S^3 & S^3\\
S^3 & L(1 + \epsilon, 1) & S^3
\end{pmatrix},
\end{split}
\] where $q \neq 1$ and $\epsilon \in \{-1, 1\}$.
\end{thm}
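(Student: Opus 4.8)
The plan is to compute each of the six vertical $3$--manifolds $V_{ij}$ directly as a genus-$1$ Heegaard-split $3$--manifold and then read off the classification from the relative positions of the second-level vanishing cycles on the torus $\Sigma_{a_1}$. First I would analyze the preimage of each arc $\gamma_{ij}$. Since $\gamma_{ij}$ meets $\Delta_1\cup\Delta_2$ only at two points on $\Delta_2$ and never crosses $\Delta_1$, the fiber over the interior of $\gamma_{ij}$ is the genus-$1$ surface $\Sigma_{a_1}$ throughout, while each of the two crossings of $\Delta_2$ is the trace of a surgery along the corresponding vanishing cycle and each endpoint on $\partial D^2$ is a definite fold at which the sphere fiber collapses to a point. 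Consequently $V_{ij}$ is $\Sigma_{a_1}\times I$ with a $2$--handle attached at each end along a vanishing cycle and the two resulting $S^2$ boundaries capped by $3$--balls; this is exactly a union of two solid tori glued along $\Sigma_{a_1}$, so each $V_{ij}$ is one of $S^3$, $S^1\times S^2$, or a lens space $L(p,q)$, with $p$ the algebraic intersection number of the two meridian curves on $\Sigma_{a_1}$.

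The key input is that the two meridians need not be two of $a'_2,b'_2,c'_2$ as they literally sit over the crossing points: transporting one crossing to the fiber over the other along the interior of $\gamma_{ij}$ applies the monodromy picked up while winding past $\Delta_1$, so the meridians are vanishing cycles possibly modified by $\mu_1^{\pm1}$. I would therefore record, for each of the six arcs, which pair among $a'_2,b'_2,c'_2$ and their images $\mu_1^{\pm1}(a'_2),\mu_1^{\pm1}(b'_2),\mu_1^{\pm1}(c'_2)$ serve as the two meridians. As a consistency check, when $\mu_1=\operatorname{id}$ and $a'_2,b'_2,c'_2$ form the triangle configuration of Lemma~\ref{monodromy}, the diagonal arcs $\gamma_{aa},\gamma_{bb},\gamma_{cc}$ give parallel meridians (hence $S^1\times S^2$) and the off-diagonal arcs $\gamma_{ba},\gamma_{cb},\gamma_{ac}$ give meridians meeting once (hence $S^3$), reproducing the first listed $6$--tuple.

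With this dictionary in place, the classification becomes a finite computation on the torus. By the $(2,0)$ analysis preceding the statement, $\mu_1$ is either the identity or $t_d^{\pm1}$ or $t_d^{\pm4}$ for a single curve $d\subset\Sigma_{a_1}$, so all the data reduce to the homology classes of $a'_2,b'_2,c'_2,d$ in $H_1(\Sigma_{a_1})\cong\Z^2$ together with the two orientation sub-cases of Lemma~\ref{monodromy}. I would enumerate the admissible configurations, using the reflection (which exchanges the labels $b,c$ and mirrors each factor) and the cyclic permutation $(a,b,c)\mapsto(b,c,a)$ to reduce to representatives, and then compute the six meridian slopes in each case. The main obstacle will be the precise bookkeeping here: determining the exact image $\mu_1^{\pm1}(\,\cdot\,)$ attached to each arc and then converting each algebraic intersection into a lens-space pair $(p,q)$ with the correct second parameter and sign $\epsilon$. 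It is this step that produces the twisted entries such as $L((q-1)^2,\epsilon q)$, $L(q-2,\epsilon)$ and $L(q,-\epsilon)$, and it must be carried out in tandem with the relations $c'_2=\mu_1^{-1}(\cdots)$ linking the three cycles. Once all slopes are identified and organized up to reflection and cyclic permutation, they assemble into exactly the five listed $6$--tuples.
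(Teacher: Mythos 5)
First, a caveat about the comparison itself: the paper you were asked to match does not prove this statement at all --- it is imported from \cite{asano}, and the only indication given of its proof is that ``configurations of vanishing cycles on $\Sigma_{a_1}$ are observed explicitly for each of the $6$-tuples.'' Your structural reduction is correct and is consistent with that approach: since each $\gamma_{ij}$ meets the indefinite locus only in two points of $\Delta_2$, its preimage is $\Sigma_{a_1}\times I$ with a fiberwise $2$-handle attached at each crossing and the two resulting sphere boundaries coned off at the definite fold, hence a genus-$1$ Heegaard splitting, so each $V_{ij}$ is $S^3$, $S^1\times S^2$ or a lens space whose first parameter is the absolute algebraic intersection number of the two meridians; and the meridians are second-stage vanishing cycles transported to a common torus fiber, picking up $\mu_1^{\pm1}$ when the transport winds past $\Delta_1$. (One slip of wording: the fiber is genus $1$ only over the segment of $\gamma_{ij}$ between the two $\Delta_2$-crossings and is a sphere outside $\Delta_2$; your subsequent sentences show you mean this.)

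The genuine gap is that you never identify the constraints that make your ``finite computation'' finite, and the computation itself --- which is the entire content of the theorem --- is deferred as ``the main obstacle.'' Arbitrary triples of classes $a_2',b_2',c_2'$ in $H_1(\Sigma_{a_1})\cong\Z^2$ do not occur: each cusp of $\Delta_2$ forces the vanishing cycles attached to its two adjacent edges to intersect transversely in a single point, and, because all three cycles are referenced to the fiber over $p_1$ through the reference path, exactly one of these three conditions is twisted by the monodromy. Concretely, after normalizing $a_2'=(1,0)$, $b_2'=(0,1)$, the admissibility conditions are $|c_2'\cdot a_2'|=1$ and $|b_2'\cdot \mu_1(c_2')|=1$, with $\mu_1\in\{\operatorname{id}, t_d^{\pm1}, t_d^{\pm4}\}$ and $d$ pinned down by Lemma~\ref{monodromy} and Lemma~\ref{d}; one can check that every configuration listed in the proof of Lemma~\ref{except} (taken from \cite{asano}) satisfies exactly these equations. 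It is the case-by-case solution of this system (one case for each possibility of $\mu_1$), followed by the meridian bookkeeping you flag but do not carry out, that produces the specific entries $L((q-1)^2,\epsilon q)$, $L(q-2,\epsilon)$, $L(q,-\epsilon)$, $L(9,2\epsilon)$, $L(5,\epsilon)$, $L(4+\epsilon,1)$, $L(1+\epsilon,1)$ and shows the list is exhaustive. As written, your proposal sets up the correct dictionary and verifies only the $\mu_1=\operatorname{id}$ row; none of the other four $6$-tuples, nor exhaustiveness, is actually proved.
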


As mentioned, the monodromy $\mu_1$ is either the identity map,  $t^{\pm 1}_d$ or $t^{\pm 4}_d$.
The first $6$-tuple occurs when $\mu_1$ is the identity map.
The fourth $6$-tuple occurs when $\mu_1 = t^{\pm 4}_d$ and the other $6$-tuples occur when $\mu_1 =t^{\pm 1}_d$.

In the proof of this theorem, configurations of vanishing cycles on $\Sigma_{a_1}$ are observed explicitly for each of $6$-tuples. 
This information will be used in the proof of Proposition \ref{containOrder3} below.

\section{Replacements of reference paths}
We first introduce a reference path for a simplified $(g, k)$-trisection $f : X \to \R^2$.
\begin{dfn}
A tree $\tau$ on $\R^2$ is called a {\it reference path} if there exists an orientation preserving diffeomorphism from $\R^2$ to itself such that
	\begin{itemize}
		\item it preserves the singular value set and
		\item the image of $\tau$ is the standard reference path.
	\end{itemize}
\end{dfn}
As we did for the standard reference path, we assign the labels $a_i, b_i$ and $c_i$ to the intersection points of $\tau$ and $\Delta_i$ for $i = 1, 2, \ldots, g - k$ as follows:
	\begin{itemize}
		\item The intersection point of $\Delta_1$ and the edge of $\tau$ connecting $p_0$ and $p_1$ is labeled with $a_1$.
		\item The other two intersection points of $\Delta_1$ and $\tau$ are labeled with $b_1$ and $c_1$ such that $a_1, b_1, c_1$ are aligned counter-clockwise along $\Delta_1$. 
		\item For each $i = 2, \ldots, g -k$, the three intersection points of $\Delta_i$ and $\tau$ are labeled with $a_i, b_i, c_i$ such that  $a_i, b_i, c_i$ are aligned counter-clockwise along $\Delta_i$ 
		and the edge of $\tau$ connecting $p_{i - 2}$ and $p_{i - 1}$ is between the edges of $\tau$ adjacent to the points labeled with $b_i$ and $c_i$.
	\end{itemize}
\begin{dfn}
Let $f$ be a simplified $(2, 0)$-trisection and $\tau$ be a reference path.
Let $a_1, b_1, c_1, a_2, b_2$ and $c_2$ be vanishing cycles on the central fiber $\Sigma$ determined by $\tau$.
The diagram on $\Sigma$ consisting of the labeled simple closed curves $a_1, b_1, c_1, a_2, b_2$ and $c_2$ is called a {\it simplified trisection diagram}.
We denote it by $(\Sigma ; \{a_1, a_2\}, \{b_1, b_2\},  \{c_1, c_2\})$.
\end{dfn}

Remark that a simplified trisection diagram of a simplified $(2, 0)$-trisection depends on the choice of a reference path even if we see it up to upper-triangular handle-slides.

Next, we introduce moves of reference paths and the maps induced by these moves.
Let $f$ be a simplified $(2, 0)$-trisection.
The middle points of the edges of $\Delta_1$ and $\Delta_2$ connecting the cusps are labeled with $v^1_1, v^2_1, v^3_1, v^1_2, v^2_2, v^3_2$ as in Figure \ref{graph}.
Recall that the labels $a_1, b_1, c_1, a_2, b_2$, and $c_2$ are givn as follows: Let $\tau$ be a reference path. The label $a_1$ is the intersection point of $\tau$ and $\Delta_1$.
If $a_1 = v^i_1$, then $b_1 = v^{i+1}_1,  c_1 = v^{i + 2}_1$, where $i = 1, 2, 3$ mod $3$. The union of edges of $\tau$ adjacent to $\Delta_2$ decomposes the disk bounded by $\Delta_2$ into three pieces. The end point of $\tau$ that does not intersect the closure of the piece containing $\Delta_1$ is labeled by $a_2$. If $a_2 = v^i_2$, then $b_2 = v^{i+1}_2,  c_2 = v^{i + 2}_2$, where $i = 1, 2, 3$ mod $3$.

\begin{figure}[htbp]
\begin{center}
\includegraphics[clip, width=5.5cm, bb=231 580 363 712]{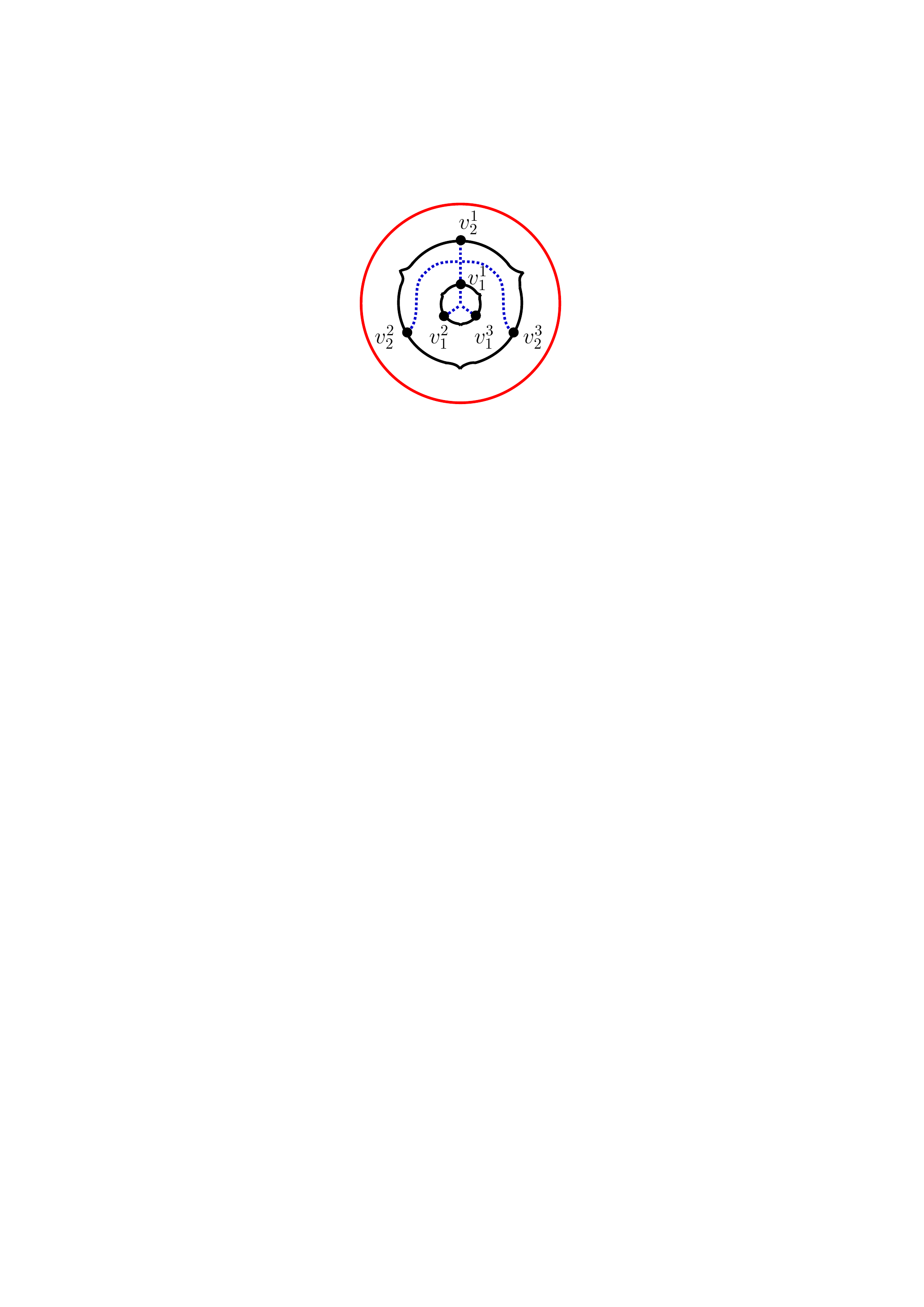}
\caption{The points $v^1_1, v^2_1, v^3_1, v^1_2, v^2_2$ and $v^3_2$.}\label{graph}
\end{center}
\end{figure}
Let $\mathcal{T}$ be the set of isotopy classes of reference paths, where the isotopy fixes the end points of reference paths.
\begin{dfn}
Let $\tau \in \mathcal{T}$, and let $\tau' \in \mathcal{T}$ be a reference path obtained from $\tau$ by replacing the arc $e_1$ connecting $p_1$ and $a_1=v^i_1$ by an arc $e_2$ connecting $p_1$ and $v^{i+1}_1$ 
such that $e_1 \cap e_2 = \{p_1\}$ and the region bounded by the union of $e_1, e_2$, the arc on $\tau$ connecting $p_0$ and $v_1^{i + 1}$ and that connecting $p_0$ and $v_1^{i}$ contains only one cusp of $\Delta_1$.
We call the map that sends $\tau$ to $\tau'$ a {\it $\Delta_1$-move}. 
See Figure~\ref{Delta1Move}.
\end{dfn}

\begin{figure}[htbp]
\begin{center}
\includegraphics[clip, width=11cm, bb=158 602 435 712]{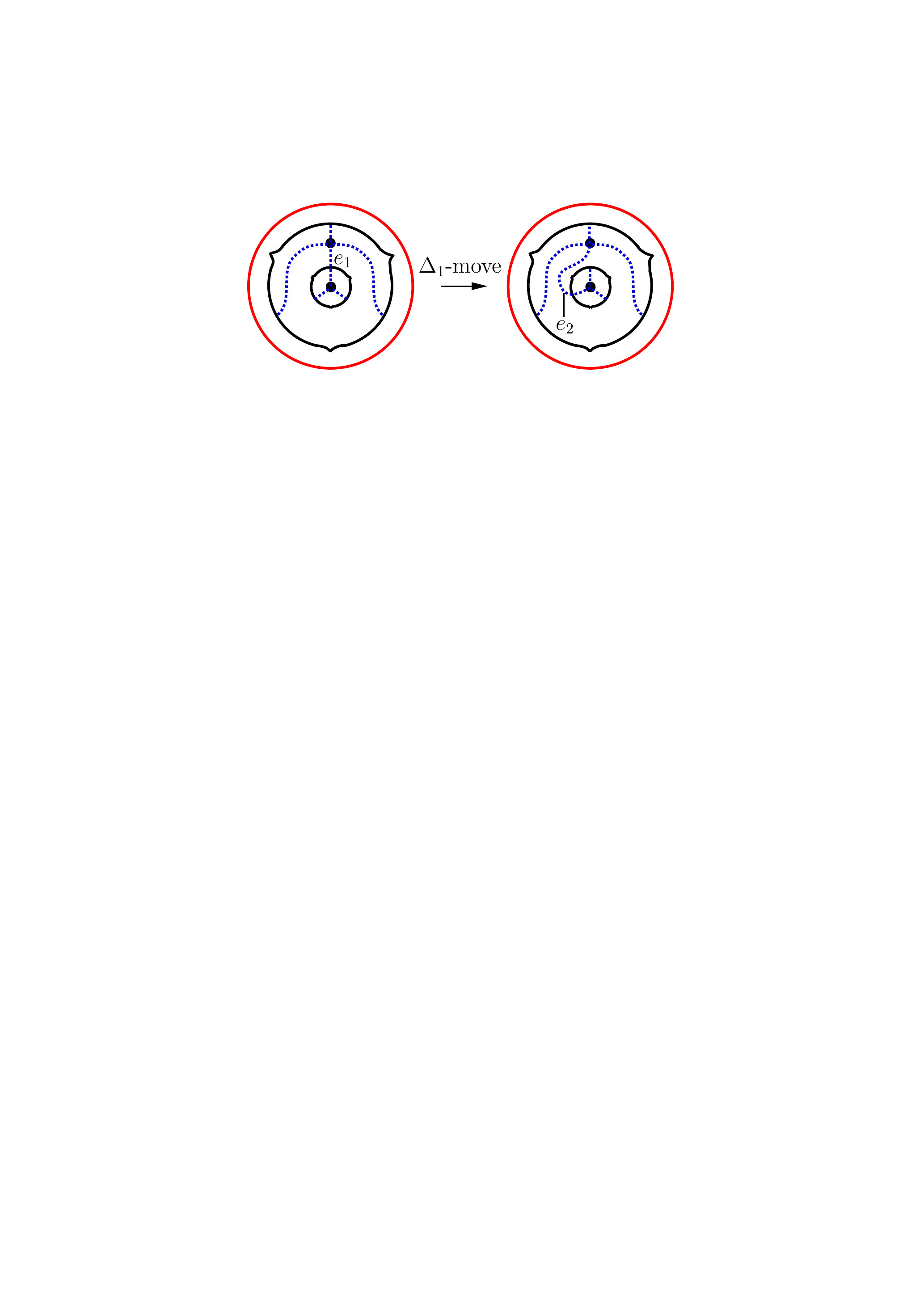}
\caption{$\Delta_1$-move.}\label{Delta1Move}
\end{center}
\end{figure}


\begin{dfn}\label{2}
Let $\tau \in \mathcal{T}$, and let $\tau' \in \mathcal{T}$ be a reference path obtained from $\tau$ by replacing the arc $e_1$ connecting $p_1$ and $c_2 = v^i_2$ by an arc $e_2$ connecting the same two points such that $e_1 \cap e_2 = \{p_1, v^i_2\}$ and the region bounded by the union of $e_1$ and $e_2$ contains $\Delta_1$. We call the map that sends $\tau$ to $\tau'$ a {\it $\Delta_2$-move}.
See Figure~\ref{Delta2Move}.
\end{dfn}

\begin{figure}[htbp]
\begin{center}
\includegraphics[clip, width=11cm, bb=158 587 435 697]{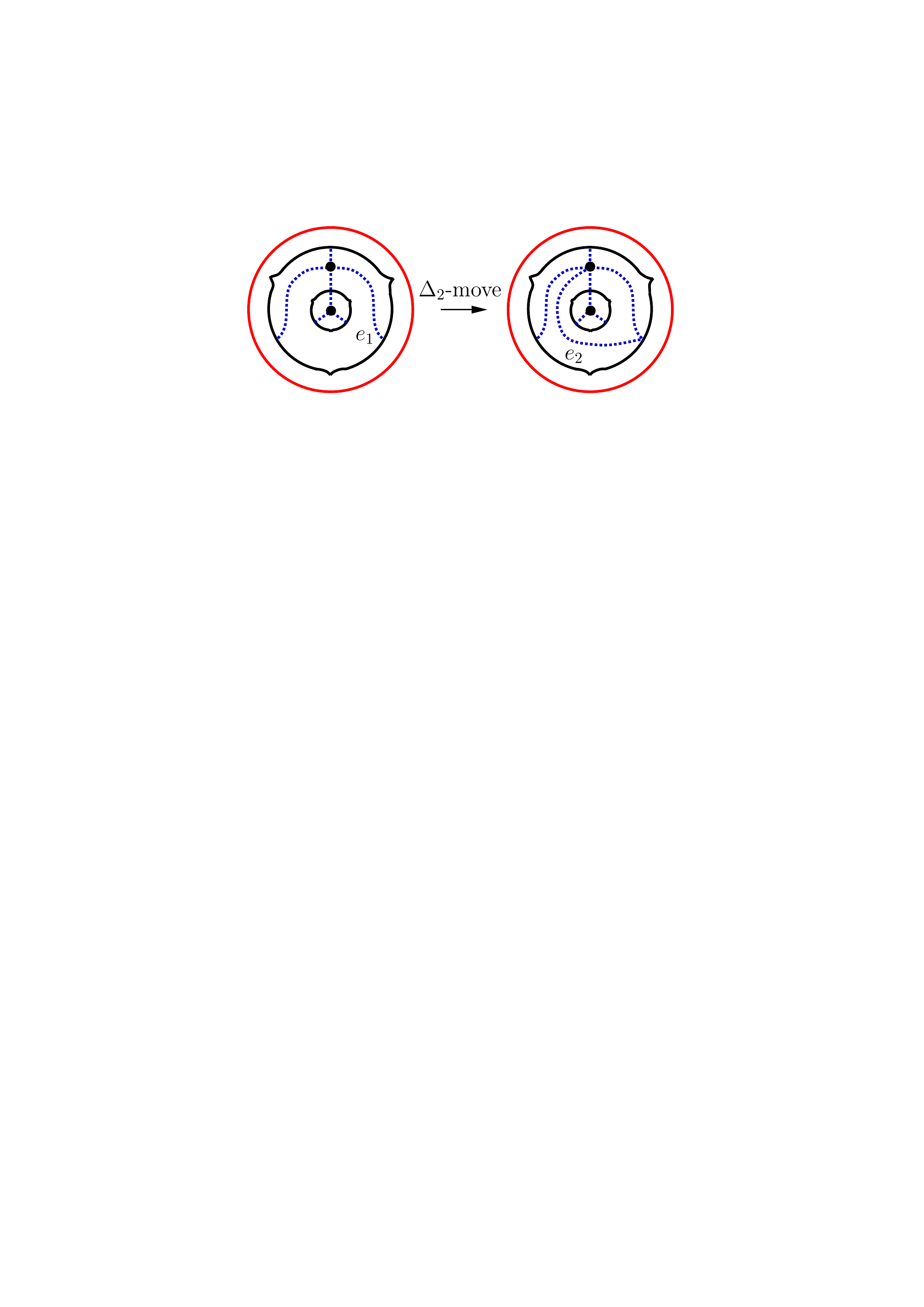}
\caption{$\Delta_2$-move.}\label{Delta2Move}
\end{center}
\end{figure}


\begin{lem}
Any $[\tau] \in \mathcal{T}$ is obtained from the standard referernce path by applying  $\Delta_1$-moves, $\Delta_2$-moves, and their inverses successively.
\end{lem}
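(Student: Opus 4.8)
The plan is to exploit the fact that, by the very definition of a reference path, every $[\tau]\in\mathcal{T}$ has the form $[\psi(\tau_0)]$ for some orientation preserving diffeomorphism $\psi$ of $\R^2$ preserving the singular value set $S$ of $f$, where $\tau_0$ is the standard reference path. Thus $\mathcal{T}$ is a single orbit of the mapping class group $G:=\pi_0\,\mathrm{Diff}^{+}(\R^2,S)$ acting on $[\tau_0]$, and the content of the lemma is exactly that the image of $G$ in the group of transformations of this orbit is generated by the $\Delta_1$- and $\Delta_2$-moves. After an ambient isotopy preserving $S$ one may arrange that $\psi$ fixes the definite-fold circle $\partial D^2$ and carries each of the concentric three-cusped circles $\Delta_1,\Delta_2$ to itself, sending cusps to cusps; since $\tau_0$ lies in the closed region bounded by $\Delta_2$, the behaviour of $\psi$ outside $\Delta_2$ is irrelevant.

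First I would decompose this region along $\Delta_1$ and $\Delta_2$ into the inner disk $B_1$ (bounded by $\Delta_1$ and containing $p_0$) and the annular region $A$ between $\Delta_1$ and $\Delta_2$ (containing $p_1$), and identify the mapping classes that can act nontrivially on $[\tau_0]$. These are the rotations $r_1,r_2$ cyclically permuting the marked midpoints $v_1^1,v_1^2,v_1^3$ of $\Delta_1$ and $v_2^1,v_2^2,v_2^3$ of $\Delta_2$, realised by diffeomorphisms supported in a collar of $\Delta_1$, resp. of $\Delta_2$ inside $A$, together with the Dehn twist $T$ along the core of $A$, which records the winding of arcs of $\tau$ around $\Delta_1$. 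The key structural claim is that the induced action of $G$ on $[\tau_0]$ factors through $\langle r_1,r_2,T\rangle$: any mapping class supported in $B_1$ reduces to a power of $r_1$, because $B_1$ is a disk carrying only the three marked midpoints, and twisting in the outer region does not meet $\tau_0$.

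It then remains to match these generators with the moves. I would check directly from the definitions that the rotation $r_1$, which advances the crossing playing the role of $a_1$ to the next midpoint, is precisely a $\Delta_1$-move, and that a single $\Delta_2$-move, replacing the arc from $p_1$ to $c_2$ by the arc that together with it bounds a disk containing $\Delta_1$, increases by one the winding of that arc around $\Delta_1$ while cyclically advancing the labels $a_2,b_2,c_2$ by the labelling convention; hence both $T$ and $r_2$ are realised by products of $\Delta_2$-moves and their inverses. Granting the structural claim, every $[\psi(\tau_0)]$, and therefore every element of $\mathcal{T}$, is obtained from $[\tau_0]$ by a finite sequence of $\Delta_1$- and $\Delta_2$-moves and their inverses.

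The main obstacle is the structural claim that $G$ acts on $[\tau_0]$ through $\langle r_1,r_2,T\rangle$, that is, that a reference path has no isotopy invariant beyond the rotational positions of $a_1$ and $a_2$ and the winding numbers of its arcs around $\Delta_1$; making this precise requires a careful analysis of $\mathrm{Diff}^{+}(\R^2,S)$ that accounts for the cusped, non-smooth nature of the $\Delta_i$ and for the interaction of twists supported in the different regions. To sidestep computing $G$ outright, a more self-contained route is a direct reduction: given an arbitrary $\tau$, put it in minimal position with respect to $\tau_0$ and induct on the total geometric intersection number of the arcs of $\tau$ in $A$ with a fixed radial arc, stripping off one winding of an outermost such arc around $\Delta_1$ by a $\Delta_2$-move at each step and finishing with $\Delta_1$-moves to correct the inner routing. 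Verifying that such an outermost unwinding is always available, keeps $\tau$ embedded, and strictly decreases the complexity is the technical crux of either approach.
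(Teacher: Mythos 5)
Your plan is compatible with the lemma and your fallback ``direct reduction'' is essentially the route the paper takes, but as written the proposal defers the entire content of the proof to the step you yourself flag as the ``technical crux,'' so it is not yet a proof. The paper argues by a short two-step normalization rather than by computing $\pi_0\,\mathrm{Diff}^{+}(\R^2,S)$: it first observes that the rotation of Figure~\ref{rotation} (equivalently, a full twist of the edge joining $p_1$ to $\Delta_1$ around $\Delta_1$) is the composition of three $\Delta_1$-moves, so that $\Delta_1$-moves and their inverses control both the cyclic position of $a_1$ and the winding of the $p_0$--$p_1$ edge; one may therefore assume the path of $\tau$ through $a_1$ and $a_2$ is straight, after which the residual freedom (the cyclic position of $a_2$ and the winding of the remaining arcs around $\Delta_1$) is exactly what $\Delta_2$-moves and their inverses adjust. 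This normalization sidesteps your ``structural claim'' about the mapping class group entirely.

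Two concrete points in your proposal need repair. First, the structural claim --- that the action of $\pi_0\,\mathrm{Diff}^{+}(\R^2,S)$ on $[\tau_0]$ factors through $\langle r_1,r_2,T\rangle$ --- is genuine content: it amounts to classifying embedded trees with the prescribed endpoints in the region bounded by $\Delta_2$ up to isotopy rel endpoints, which is the same combinatorial problem your inductive unwinding is supposed to solve, so the first approach does not actually avoid the second. Second, your identification of $r_2$ with a product of $\Delta_2$-moves is not a definition-chase: a $\Delta_2$-move fixes the endpoints on $\Delta_2$ pointwise and reroutes a single arc around $\Delta_1$ (relabelling $a_2,b_2,c_2$ by the convention), whereas $r_2$ permutes the endpoints and tilts all three arcs; applied to $\tau_0$ these produce different elements of $\mathcal{T}$, and showing that $r_2(\tau_0)$ is reachable by the moves requires an argument (it follows from the normalization above, but not from ``both cube to $T$''). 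Until the unwinding induction --- availability of an outermost arc, preservation of embeddedness, and strict decrease of complexity --- is actually carried out, or replaced by the paper's straightening argument, the lemma is not established.
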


\begin{proof}
The rotation in Figure~\ref{rotation} is obtained by applying $\Delta_1$-moves three times successively.
Therefore by using $\Delta_1$-moves or their inverses, we can assume that the path on $\tau$ connecting $a_1$ and $a_2$ is straight.
Then the standard reference path can be obtained by applying $\Delta_2$-moves or their inverses.
\end{proof}

\begin{figure}[htbp]
\begin{center}
\includegraphics[clip, width=13cm, bb=128 583 476 712]{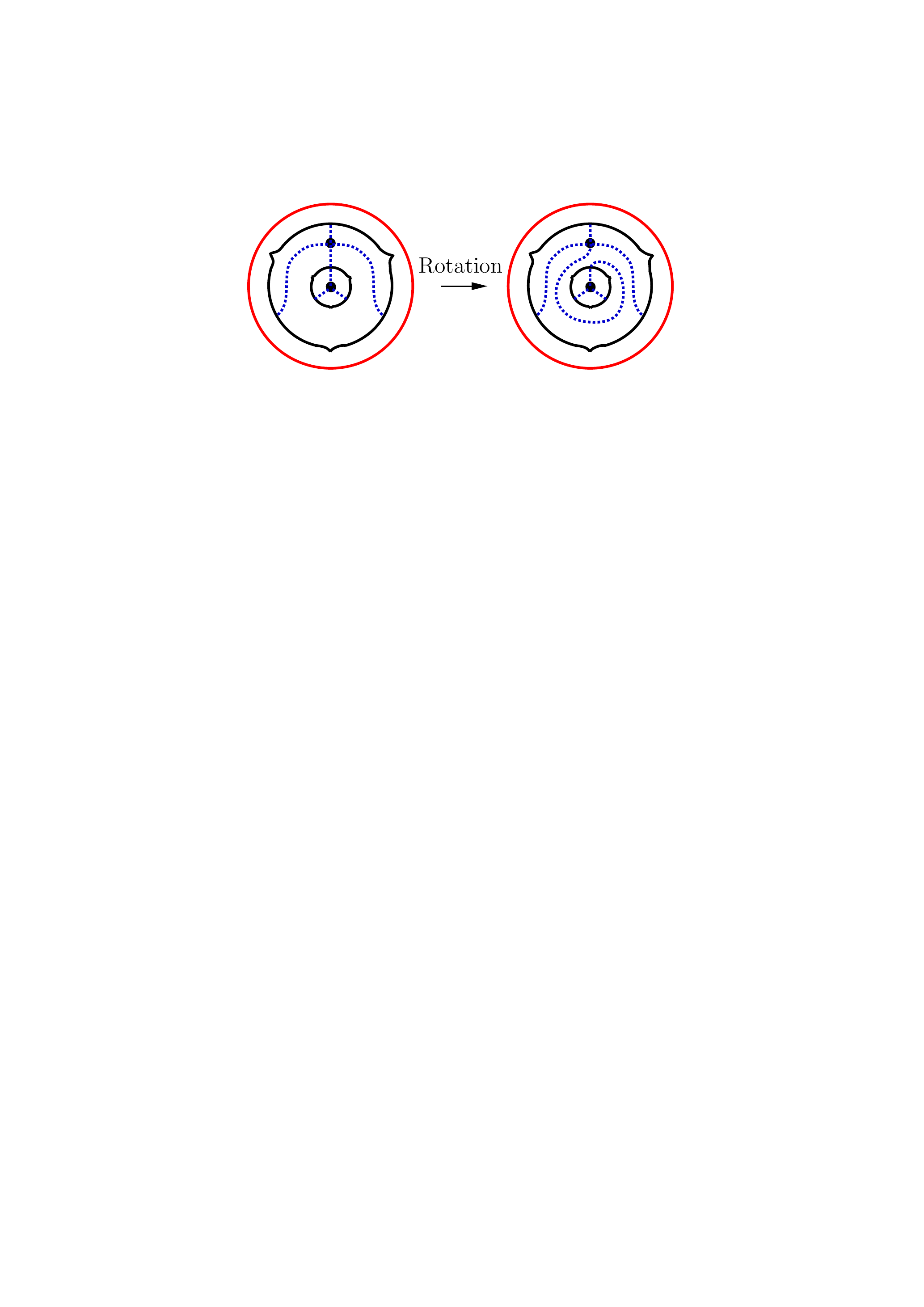}
\caption{Rotation.}
\label{rotation}
\end{center}
\end{figure}

Let $\mathcal{V}$ be the set of configurations of $6$ simple closed curves $\{ a_1, a_2 \}, \{ b_1, b_2 \}, \{ c_1, c_2 \}$ on the central fiber $\Sigma$ up to isotopy.
Let $\mathcal{V}_f$ be the subset of $\mathcal{V}$ consisting of simplified trisection diagrams of a simplified $(2, 0)$-trisection $f$.

Let $\sigma_1 : \mathcal{V} \to \mathcal{V}$ be the map induced by a $\Delta_1$-move and $\sigma_2 : \mathcal{V} \to \mathcal{V}$ be the map induced by a $\Delta_2$-move. 
Remark that the vanishing cycles $a_1, b_1, c_1$ on $\Sigma$ before applying a $\Delta_1$-move coincide with the vanishing cycles $b_1,  c_1, a_1,$ respectively, on $\Sigma$ after the $\Delta_1$-move.
Hence $\sigma_1(a_1) = b_1, \sigma_1(b_1) = c_1$ and $\sigma_1(c_1) = a_1$.
The vanishing cycles $a_2, b_2, c_2$ on $\Sigma$ after applying a $\Delta_1$-move are isotopic to the curves obtained from these vanishing cycles by applying the Dehn twist along the simple closed curve $t_{a_1}(b_1)$, where $t_a(b)$ means a simple closed curve obtained from $b$ by applying the right-handed Dehn twist along $a$, see \cite[Lemma 3.2]{hayano}.
Therefore we have $\sigma_1(a_2) = t_{t_{a_1}(b_1)}(a_2), \sigma_1(b_2) = t_{t_{a_1}(b_1)}(b_2)$ and $\sigma_1(c_2) = t_{t_{a_1}(b_1)}(c_2)$.

For a simple closed curve $w$ on $\Sigma$, we define $\mu_1^{-1}(w)$ as follows.
As mentioned, $\mu_1$ is either the identity map, $t_d^{\pm 1}$ or $t_d^{\pm 4}$.
If $\mu_1$ is the identity map, then we define $\mu_1^{-1}(w) = w$.
In the other cases, the core $d$ of the Dehn twist corresponds to one of $\delta_1, \delta_2$ and $\delta_3$ in Figure~\ref{genus1bd3}, which is a simple closed curve on $\Sigma$.
Suppose that it corresponds to $\delta_j$.
Then we define $\mu_1^{-1}(w) = t_{\delta_j}^{\mp 1}(w)$ if $\mu_1 = t_d^{\pm 1}$ and $\mu_1^{-1}(w) = t_{\delta_j}^{\mp 4}(w)$ if $\mu_1 = t_d^{\pm 4}$.

A $\Delta_2$-move does not change $a_1, b_1, c_1$.
The vanishing cycles $a_2, b_2$ and $c_2$ on $\Sigma$ after applying a $\Delta_2$-move are isotopic to $b_2, \mu_1^{-1}(c_2)$ and $a_2$, respectively, see \cite[Theorem 3.4]{hayano}.
Note that if $c_2$ is the vanishing cycle associated with the path from $p_0$ to $p_1$ and moving on $e_1$ in Definition~\ref{2} then $\mu_1^{-1}(c_2)$ is the vanishing cycle associated with the path from $p_0$ to $p_1$ and moving on $e_2$ in Definition~\ref{2}.
Thus we have $\sigma_2(a_2) = b_2, \sigma_2(b_2) =\mu_1^{-1}(c_2)$ and $\sigma_2(c_2) = a_2$.

In summary, we have 
\[ 
\begin{split}
\sigma_1(\Sigma; \{ a_1, a_2 \}, \{ b_1, b_2 \}, \{ c_1, c_2 \}) &= (\Sigma; \{ b_1, t_{t_{a_1}(b_1)}(a_2) \}, \{ c_1, t_{t_{a_1}(b_1)}(b_2) \}, \{ a_1, t_{t_{a_1}(b_1)}(c_2) \})\\
\sigma_2(\Sigma; \{ a_1, a_2 \}, \{ b_1, b_2 \}, \{ c_1, c_2 \}) &= (\Sigma; \{ a_1, b_2 \}, \{ b_1, \mu_1^{-1}(c_2) \}, \{ c_1, a_2 \}).
\end{split}
\]



We close this section with two lemmas.
\begin{lem}\label{trivial}
If the monodromy $\mu_1$ is the identity map, then for each $V$ in $\mathcal{V}_f$ and $i =1, 2$, there exists an automorphism of $\Sigma$ that sends $V$ to $\sigma_iV$.
\end{lem}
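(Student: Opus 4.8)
The plan is to exploit the three-fold symmetry of the outer cusped circle $\Delta_2$ together with the explicit model of the vanishing cycles that becomes available once $\mu_1=\id$.

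First I would record what the hypothesis buys us. Because $\mu_1=\id$ we have $\mu_1^{-1}(c_2)=c_2$, so the formula for $\sigma_2$ collapses to $\sigma_2(\Sigma;\{a_1,a_2\},\{b_1,b_2\},\{c_1,c_2\})=(\Sigma;\{a_1,b_2\},\{b_1,c_2\},\{c_1,a_2\})$, while the formula for $\sigma_1$ retains its shape. By Lemma~\ref{monodromy} and the discussion following it, $\mu_1=\id$ forces the first of the two structural cases: $\Sigma$ is obtained from a regular neighborhood $N$ of $a_1\cup b_1\cup c_1$, a genus-$1$ surface with three boundary circles, by capping two of the boundary circles with disks and gluing a once-punctured torus $H$ along the third (equivalently, $\delta_1,\delta_2,\delta_3$ are all null-homotopic on $\Sigma_{a_1}$). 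This is exactly the first $6$-tuple of Theorem~\ref{thmA}, and for that $6$-tuple the configuration of the six vanishing cycles on $\Sigma$ was computed explicitly in \cite{asano}.

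The core of the argument is to place this configuration in standard position. Using $\mu_1=\id$, I would isotope $a_2,b_2,c_2$ off $N$ so that they sit inside the handle $H$ as the three symmetric curves of a once-punctured torus, pairwise meeting once, while $a_1,b_1,c_1$ remain in $N$. Let $\rho$ be the orientation-preserving order-$3$ self-homeomorphism of $\Sigma$ that rotates $H$, sending $a_2\mapsto b_2\mapsto c_2\mapsto a_2$, and is the identity on $N$ and on the two capping disks; after an isotopy we may take $\rho$ to fix $\partial H$ pointwise so that it glues to the identity across $N$. Then $\rho$ fixes each of $a_1,b_1,c_1$ and cyclically permutes $a_2,b_2,c_2$, so comparison with the collapsed formula gives $\rho(V)=\sigma_2 V$ and settles $i=2$. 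For $i=1$, note that in this representative $t_{a_1}(b_1)$ lies in $N$ and is therefore disjoint from $a_2,b_2,c_2\subset H$, so the twist $t_{t_{a_1}(b_1)}$ appearing in $\sigma_1$ acts trivially on the second-level curves; a direct comparison then shows that $\sigma_1 V$ and $\rho^{-1}(V)$ consist of the very same three pairs of curves, agreeing as unordered triples of pairs. The only residual discrepancy is in which pair carries the labels $a,b,c$, i.e.\ the cyclic relabeling of the three families $\{a_i\},\{b_i\},\{c_i\}$ induced by the $\mathbb{Z}/3$-symmetry of the trisection, up to which diagrams are compared; hence $\rho^{-1}$ realizes $\sigma_1$ as well.

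The step I expect to be the real obstacle is this standard-position claim: verifying, from the explicit configuration for the first $6$-tuple in \cite{asano}, that the second-level vanishing cycles can be isotoped simultaneously into the complementary handle $H$ in their three-fold symmetric shape and made disjoint from $t_{a_1}(b_1)$. This is precisely where triviality of $\mu_1$ enters — through the vanishing of the boundary twists $t_{\delta_j}$ on $\Sigma_{a_1}$ it removes the monodromy obstruction to decoupling the two levels, so that the evident $\mathbb{Z}/3$-symmetry of $\Delta_2$ is induced by an honest automorphism of $\Sigma$ fixing the first-level curves. Once this is established, the remaining verifications (that $\rho$ fixes the $\Delta_1$-curves and that $\rho,\rho^{-1}$ reproduce the two formulas) are routine.
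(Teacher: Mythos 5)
Your overall strategy---once $\mu_1=\id$ forces the ``two disks plus a once-punctured torus'' case, view $\Sigma$ as a connected sum of two tori and realize the moves by order-$3$ rotations of the summands---is exactly the paper's, and your treatment of $i=2$ via the rotation $\rho$ of the handle $H$ is correct. The gap is in the case $i=1$. Granting that $t_{t_{a_1}(b_1)}$ acts trivially on $a_2,b_2,c_2$, you have $\sigma_1V=(\Sigma;\{b_1,a_2\},\{c_1,b_2\},\{a_1,c_2\})$, whereas $\rho^{-1}(V)=(\Sigma;\{a_1,c_2\},\{b_1,a_2\},\{c_1,b_2\})$. These agree only as unordered collections of pairs; the lemma asks for an automorphism carrying $V$ to $\sigma_1V$ as a \emph{labelled} diagram, i.e.\ one sending the $a$-pair of $V$ to the $a$-pair of $\sigma_1V$ (so $a_1\mapsto b_1$, $b_1\mapsto c_1$, $c_1\mapsto a_1$ with $a_2,b_2,c_2$ fixed), which $\rho^{-1}$ does not do. You cannot dismiss the discrepancy as ``a cyclic relabelling up to which diagrams are compared'': nowhere does the paper quotient by relabelling the three families, and if it did, the invariant $I$ of Lemma~\ref{invariantForHS} --- whose entries are precisely cyclically permuted by $\sigma_2$ --- would distinguish nothing and the main theorem would be vacuous. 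So ``$\rho^{-1}$ realizes $\sigma_1$'' is false as stated.

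The repair is immediate and is what the paper's proof does: work on the \emph{other} summand. The first torus $T_1$ carries $a_1,b_1,c_1$ pairwise intersecting once; since the mapping class group of a once-punctured torus rel boundary surjects onto $\mathrm{SL}(2,\Z)$, there is an automorphism supported in $T_1$ minus the connect-sum disk, extended by the identity over $H$, that induces $a_1\mapsto b_1\mapsto c_1\mapsto a_1$ and fixes $a_2,b_2,c_2$; composed with your observation that the twist in $\sigma_1$ acts trivially on the second-level curves, this sends $V$ to $\sigma_1V$. (The same remark justifies your aside that $\rho$ need only induce the right permutation of isotopy classes, not literally have order three.) The remaining point you flag yourself --- that $a_2,b_2,c_2$ can be put simultaneously into $H$ in the symmetric position with $a_1,b_1,c_1$ kept in $N$ --- is likewise asserted rather than argued in the paper, by appeal to the explicit configuration for the first $6$-tuple of Theorem~\ref{thmA} in \cite{asano}, so you are on equal footing there.
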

\begin{proof}
Since the monodromy $\mu_1$ is the identity map of $\Sigma_{a_1}$, the simplified trisection diagram $V$ is a connected sum of two tori $T_j$, $j = 1, 2$,  with diagrams consisting of three simple closed curves $a_j, b_j, c_j$ intersecting once each other transversely. 
On each torus $T_j$, we can confirm that there exists an automorphism of $T_j$ that sends the diagram to the diagram consisting of $\sigma_i(a_j), \sigma_i(b_j), \sigma_i(c_j)$.
Thus we have the assertion.
\end{proof}

\begin{lem}\label{differentRightLeftEq}
If two simplified $(2, 0)$-trisections are right-left equivalent, then the configurations of vanishing cycles are related by a finite sequence of automorphisms of $\Sigma$, upper-triangular handle-slides, and replacing reference paths.
\end{lem}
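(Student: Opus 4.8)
The plan is to read the required relation directly off the right-left equivalence, using the diffeomorphism of the base to transport a reference path and the diffeomorphism of the total space to transport vanishing cycles.

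Write the two right-left equivalent trisections as $f : X \to \R^2$ and $g : X' \to \R^2$, so that there are diffeomorphisms $\phi : X \to X'$ and $\psi : \R^2 \to \R^2$ with $g \circ \phi = \psi \circ f$. First I would note that $\psi$ carries the singular value set of $f$ onto that of $g$; as both are the standard configuration of two concentric three-cusped circles, $\psi$ sends the center $p_0$ of $D^2$ for $f$ to the center for $g$. Fixing the standard reference path $\tau_0$ for $f$, its image $\psi(\tau_0)$ is then a reference path for $g$ in the sense of the definition (composing with the reflection first if $\psi$ reverses orientation, which merely interchanges the labels $b$ and $c$). Thus the configuration of $g$ with respect to the standard reference path and its configuration with respect to $\psi(\tau_0)$ differ only by a replacement of reference paths, which by the lemma above is a finite sequence of $\Delta_1$- and $\Delta_2$-moves.

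Next I would transport the vanishing cycles by $\phi$. Since $\phi$ covers $\psi$, it restricts to a diffeomorphism from the central fiber $\Sigma = f^{-1}(p_0)$ onto the central fiber $\Sigma' = g^{-1}(\psi(p_0))$; after fixing an identification $\Sigma' \cong \Sigma$ this restriction is an automorphism of $\Sigma$. Because a vanishing cycle is characterized intrinsically as the simple closed curve that shrinks to a point as one moves along the reference path toward the adjacent fold, and because $\phi$ and $\psi$ intertwine $f$ and $g$, the restriction $\phi|_\Sigma$ sends the vanishing cycles $a_1, b_1, c_1$ of $f$ associated with $\tau_0$ to the vanishing cycles of $g$ associated with $\psi(\tau_0)$, up to isotopy. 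This handles the first-level cycles.

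The main obstacle is the second-level cycles $a_2, b_2, c_2$, which are well defined on $\Sigma$ only up to upper-triangular handle-slides over $a_1$. Here I would use that $\phi$ intertwines the surgery from $\Sigma$ to $\Sigma_{a_1}$ with the surgery from $\Sigma'$ to $\Sigma'_{a'_1}$ along the matched curves $a_1 \mapsto a'_1$, so that $\phi$ induces a diffeomorphism $\Sigma_{a_1} \to \Sigma'_{a'_1}$ carrying $a_2, b_2, c_2$ to the corresponding second-level cycles of $g$, up to isotopy on $\Sigma_{a_1}$. Lifting this correspondence back from $\Sigma_{a_1}$ to $\Sigma$ is exactly where the indeterminacy appears: an isotopy on $\Sigma_{a_1}$ that crosses either of the two surgery disks pulls back to a handle-slide over $a_1$ on $\Sigma$, precisely an upper-triangular handle-slide. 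Assembling the three ingredients — the change of reference path from the standard one to $\psi(\tau_0)$, the automorphism $\phi|_\Sigma$, and the upper-triangular handle-slide indeterminacy of the second-level cycles — shows that the two configurations are related by a finite sequence of automorphisms of $\Sigma$, upper-triangular handle-slides, and replacements of reference paths, as required.
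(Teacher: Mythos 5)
Your proposal is correct and follows essentially the same route as the paper's proof: transport the reference path by $\psi$, transport the level-one vanishing cycles by $\phi$ restricted to the central fiber, and absorb the ambiguity of the level-two cycles into upper-triangular handle-slides via the surgery from $\Sigma$ to $\Sigma_{a_1}$. Your explicit remark about composing with a reflection when $\psi$ reverses orientation is a small refinement the paper leaves implicit, but it does not change the argument.
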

\begin{proof}
Let $f$ and $g$ be a pair of right-left equivalent simplified $(2, 0)$-trisections.
Hence there exist self-diffeomorphisms $\phi$ on $X$ and $\psi$ on $\R^2$ such that 
$$
\begin{CD}
X @> \phi >> X\\
@VfVV @VVgV \\
\R^2 @ > \psi >> \R^2
\end{CD}
$$ is commutative.
Let $\tau$ be a reference path for $f$ and we choose the reference path for $g$ as $\psi(\tau)$.
Let $(\Sigma; \{ a_1, a_2 \}, \{ b_1, b_2 \}, \{ c_1, c_2 \})$ and $(\tilde \Sigma; \{\tilde a_1, \tilde a_2 \}, \{\tilde b_1, \tilde b_2 \}, \{ \tilde c_1, \tilde c_2 \})$ be simplified trisection diagrams of $f$ and $g$, respectively.
Then the diffeomorphism $\phi$ restricted to the preimage of a disk on $\R^2$ containing $\Delta_1$ ensures that there exists a diffeomorphism from $\Sigma$ to $\tilde \Sigma$ that maps  $a_1, b_1$ and $c_1$ on $\Sigma$ to $\tilde a_1, \tilde b_1$ and $\tilde c_1$ on $\tilde \Sigma$, respectively, up to isotopy.
The surgery from $\Sigma$ to $\Sigma_{a_1}$ cuts $\Sigma$ along $a_1$ and fill the appearing boundary components by disks.
An isotopy of a vanishing cycle on $\Sigma_{a_1}$ that passes through these disks corresponds to handle-slides and this operation does not change the right-left equivalence class of the map.
The diffeomorphism $\phi$ ensures that there exists a diffeomorphism from $\Sigma_{a_1}$ to $\tilde \Sigma_{\tilde a_1}$ that maps  $a_2', b'_2$ and $c'_2$ on $\Sigma_{a_1}$ to $\tilde a'_2, \tilde b'_2$ and $\tilde c'_2$ on $\tilde \Sigma_{\tilde a_1}$, respectively, up to isotopy, where $a'_2, b'_2, c'_2$ are vanishing cycles on $\Sigma_{a_1}$ obtained from $a_2, b_2, c_2$ on $\Sigma$ by the surgery along $a_1$ and the notations $\tilde a_2', \tilde b_2', \tilde c_2'$ are those on $\tilde \Sigma_{\tilde a_1}$ obtained from $\tilde a_2, \tilde b_2, \tilde c_2$ similarly, respectively.
Hence there exists a diffeomorphism from $\Sigma$ to $\tilde \Sigma$ that maps  $a_2, b_2$ and $c_2$ on $\Sigma$ to $\tilde a_2, \tilde b_2$ and $\tilde c_2$ on $\tilde \Sigma$, respectively, up to isotopy and upper-triangular handle-slides.
This completes the proof.
\end{proof}

\section{Proof of Theorem 2}
Let $\eta$ and $\eta'$ be two maps from $\mathcal{V}_f$ to itself obtained by applying $\sigma_1, \sigma_2$ and their inverses successively.
We set $\eta \sim \eta'$ if for any $V \in \mathcal{V}_f$ there exists $h \in \mathrm{Mod(\Sigma)}$ such that $\eta(V) = h( \eta'(V))$, where $\Mod(\Sigma)$ is the mapping class group of $\Sigma$.
We define the group $G_f$ associated with $f$ by $\langle \sigma_1, \sigma_2 \mid \: \sim \rangle$.
Remark that, by Lemma~\ref{trivial}, if the monodromy $\mu_1$ is the identity map, then $G_f = \{ 1 \}$. 

Theorem 1 will follow from the next proposition.
\begin{prop}\label{containOrder3}
If the monodromy $\mu_1$ is not the identity map, then $G_f$ contains $\Z / 3\Z$ as a subgroup.
\end{prop}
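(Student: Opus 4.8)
The plan is to show that the class of the $\Delta_2$-move generates a copy of $\Z/3\Z$ inside $G_f$; concretely, that $[\sigma_2]$ has order exactly $3$. First I would compute $\sigma_2^3$ from the formula for $\sigma_2$. Recording the move on the index-$2$ curves only, $\sigma_2$ sends $(a_2,b_2,c_2)$ to $(b_2,\mu_1^{-1}(c_2),a_2)$ and fixes $a_1,b_1,c_1$; iterating three times yields $(a_2,b_2,c_2)\mapsto(\mu_1^{-1}(a_2),\mu_1^{-1}(b_2),\mu_1^{-1}(c_2))$. By the definition of $\mu_1^{-1}$, this is the effect of $t_{\delta_j}^{\mp 1}$ (if $\mu_1=t_d^{\pm1}$) or $t_{\delta_j}^{\mp4}$ (if $\mu_1=t_d^{\pm4}$) on each of $a_2,b_2,c_2$, where $\delta_j$ is the relevant boundary curve of a regular neighborhood of $a_1\cup b_1\cup c_1$ in Figure~\ref{genus1bd3}. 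Since $\delta_j$ is disjoint from $a_1,b_1,c_1$, the Dehn twist $t_{\delta_j}$ fixes their isotopy classes, so the single automorphism $t_{\delta_j}^{\mp1}$ (resp. $t_{\delta_j}^{\mp4}$) realizes $\sigma_2^3$ on the whole configuration. Hence $\sigma_2^3\sim1$ and $[\sigma_2]^3=1$, so the order of $[\sigma_2]$ divides $3$.

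It then remains to prove $[\sigma_2]\neq1$; together with $[\sigma_2]^3=1$ this forces the order to be $3$, whence $\langle[\sigma_2]\rangle\cong\Z/3\Z\le G_f$. For this I would produce a single $V\in\mathcal{V}_f$ whose image $\sigma_2V$ is not related to $V$ by an automorphism of $\Sigma$. The tool I would use is the triple of within-pair geometric intersection numbers $T(V)=(i(a_1,a_2),i(b_1,b_2),i(c_1,c_2))$, where $i(\,\cdot\,,\,\cdot\,)$ denotes geometric intersection number. This triple is invariant under the label-preserving $\Mod(\Sigma)$-action, because any $h$ with $h(V)=\sigma_2V$ must carry the $a$-, $b$- and $c$-pair of $V$ to the corresponding pair of $\sigma_2V$, so that $i(a_1,a_2)=i(a_1,b_2)$, $i(b_1,b_2)=i(b_1,\mu_1^{-1}(c_2))$ and $i(c_1,c_2)=i(c_1,a_2)$. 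Thus $\sigma_2\sim1$ would force these three equalities to hold simultaneously, and it suffices to exhibit one configuration violating at least one of them.

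To complete the argument I would invoke the explicit vanishing-cycle configurations on $\Sigma_{a_1}$ recorded in the proof of Theorem~\ref{thmA}. The hypothesis $\mu_1\neq\id$ places $f$ in one of the non-trivial $6$-tuples (every case but the first), and for each such case one reads off $a_1,b_1,c_1,a_2,b_2,c_2$ explicitly and evaluates the three intersection numbers above together with their $\sigma_2$-images. The hard part is exactly this verification: one must check that the non-triviality of $\mu_1=t_d^{\pm1}$ or $t_d^{\pm4}$ actually changes a coordinate of $T$, so that the symmetry present in the $\mu_1=\id$ situation of Lemma~\ref{trivial} (where $\sigma_2\sim1$) is genuinely destroyed. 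Intuitively this is forced because $\mu_1^{-1}(c_2)$ is a nontrivial Dehn-twist image of $c_2$ while the cyclic reshuffling of $a_2,b_2,c_2$ mixes the three pairs; should $T$ fail to separate $V$ from $\sigma_2V$ in some case, I would fall back on a finer $\Mod(\Sigma)$-invariant, such as the full matrix of pairwise geometric intersection numbers taken up to transpositions within pairs, or the homeomorphism type of the complement of the six curves. Once one such inequality is established, $[\sigma_2]\neq1$, and $\langle[\sigma_2]\rangle\cong\Z/3\Z$ is the required subgroup.
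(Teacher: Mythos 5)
Your first step---showing $[\sigma_2]^3=1$ by computing $\sigma_2^3(a_2,b_2,c_2)=(\mu_1^{-1}(a_2),\mu_1^{-1}(b_2),\mu_1^{-1}(c_2))$ and realizing this by the global automorphism $t_{\delta_j}^{\mp1}$ or $t_{\delta_j}^{\mp4}$, which fixes $a_1,b_1,c_1$ because $\delta_j$ is disjoint from them---is exactly the paper's Lemma~\ref{order3}, and your reduction to ``order divides $3$ and is not $1$'' is sound. The gap is in the second half: your separating invariant $T(V)=(i(a_1,a_2),i(b_1,b_2),i(c_1,c_2))$ does not do the job. First, its first coordinate is identically zero: $a_2,b_2,c_2$ are vanishing cycles over $p_1$, hence live on $\Sigma_{a_1}$ and lift to curves on $\Sigma$ disjoint from $a_1$, so $i(a_1,a_2)=i(a_1,b_2)=0$ and that equation is vacuous. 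Second, and more seriously, the curves $a_2,b_2,c_2$ on $\Sigma$ are only well defined up to upper-triangular handle-slides over $a_1$, and $i(b_1,b_2)$ is \emph{not} invariant under such a slide: a band sum of $b_2$ with $a_1$ changes even the algebraic intersection $b_1\cdot b_2$ by $\pm(b_1\cdot a_1)=\mp1$. Since the statement you must ultimately prove (see the reduction preceding Lemma~\ref{order3}, and the conclusion of Theorem 1) is that $V$, $\sigma_2V$, $\sigma_2^2V$ differ modulo $\Mod(\Sigma)$ \emph{and} upper-triangular handle-slides, a quantity that changes under handle-slides cannot certify the inequality. Your proposed fallbacks (the full geometric intersection matrix, the homeomorphism type of the complement) suffer from the same defect.

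The paper's invariant is engineered precisely to avoid both problems: it pairs $a_2,b_2,c_2$ against $b_1+c_1$, whose algebraic intersection with $a_1$ vanishes, so $|(b_1+c_1)\cdot w|$ is unchanged by handle-slides over $a_1$ (Lemma~\ref{invariantForHS}); moreover $b_1'+c_1'$ descends to the class of $d$ on the torus $\Sigma_{a_1}$ (Lemma~\ref{d}), so each entry equals $|d\cdot w'|$ and is computable directly from the homology data $[a_2'],[b_2'],[c_2'],[d]\in H_1(\Sigma_{a_1})$ recorded in the proof of Theorem~\ref{thmA}. Your $i(b_1,b_2)$ lives on the genus-$2$ surface and is not determined by that data, so even the case-by-case verification you defer to could not be carried out as described. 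Finally, note that the non-parallelism hypotheses of Theorem 1 are genuinely needed at this step (e.g.\ the second $6$-tuple with $|1\mp q|=1$), so ``$\mu_1\neq\operatorname{id}$ destroys the symmetry'' cannot hold unconditionally; any correct argument must exclude those exceptional configurations, as Lemma~\ref{except} does.
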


Let $V =  (\Sigma; \{ a_1, a_2 \}, \{ b_1, b_2 \}, \{ c_1, c_2 \})$ be a simplified trisection diagram associated with a reference path. 
\begin{lem}\label{order3}
For any $V \in \mathcal{V}_f$, there exists an automorphism of $\Sigma$ that sends $V$ to $\sigma_2^3 V$. 
\end{lem}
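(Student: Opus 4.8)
The plan is to compute $\sigma_2^3$ directly from the formula
\[
\sigma_2(\Sigma; \{ a_1, a_2 \}, \{ b_1, b_2 \}, \{ c_1, c_2 \}) = (\Sigma; \{ a_1, b_2 \}, \{ b_1, \mu_1^{-1}(c_2) \}, \{ c_1, a_2 \})
\]
established above, and then to exhibit an automorphism of $\Sigma$ realizing the result. The first observation is that $\sigma_2$ fixes $a_1, b_1, c_1$ and acts on the triple of second curves by the rule $(a_2, b_2, c_2) \mapsto (b_2, \mu_1^{-1}(c_2), a_2)$; that is, it cyclically permutes the three ``slots'' while inserting a single factor $\mu_1^{-1}$ into the slot labelled $b$.

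Next I would iterate this rule. Applying it once more gives $(a_2, b_2, c_2) \mapsto (\mu_1^{-1}(c_2), \mu_1^{-1}(a_2), b_2)$, and a third application gives
\[
\sigma_2^3 : (a_2, b_2, c_2) \longmapsto (\mu_1^{-1}(a_2), \mu_1^{-1}(b_2), \mu_1^{-1}(c_2)),
\]
with $a_1, b_1, c_1$ fixed throughout. The bookkeeping is transparent: the underlying permutation of the three slots is a $3$-cycle, so after three steps the labels return to their original positions, while each of $a_2, b_2, c_2$ passes exactly once through the $b$-slot and hence acquires exactly one factor of $\mu_1^{-1}$.

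To conclude, I would realize $\mu_1^{-1}$ by a mapping class of $\Sigma$. By definition $\mu_1^{-1}$ is a power $t_{\delta_j}^{\mp 1}$ or $t_{\delta_j}^{\mp 4}$ of the Dehn twist along one of the boundary curves $\delta_j$ of a regular neighborhood of $a_1 \cup b_1 \cup c_1$, as in Lemma~\ref{monodromy} (when $\mu_1$ is the identity the statement is trivial, with $h = \mathrm{id}$). Let $h$ be this same power of $t_{\delta_j}$, viewed as an automorphism of $\Sigma$. Since $\delta_j$ is a boundary component of the regular neighborhood of $a_1 \cup b_1 \cup c_1$ and is therefore disjoint from $a_1, b_1, c_1$, the twist $h$ fixes each of $a_1, b_1, c_1$ up to isotopy; and directly from the definition of $\mu_1^{-1}$ we have $h(a_2) = \mu_1^{-1}(a_2)$, $h(b_2) = \mu_1^{-1}(b_2)$, $h(c_2) = \mu_1^{-1}(c_2)$. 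Comparing with the formula for $\sigma_2^3$ yields $h(V) = \sigma_2^3 V$, as desired.

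The computation of $\sigma_2^3$ is purely formal, so I do not anticipate a genuine difficulty; the only geometric content is that the curve $\delta_j$ carrying the monodromy twist is disjoint from $a_1, b_1, c_1$, which is immediate because $\delta_j$ is by construction a boundary component of their regular neighborhood. The one point I would take care to state explicitly is that this disjointness is exactly what lets a single automorphism $h$ simultaneously fix the genus-one curves $a_1, b_1, c_1$ and twist the second curves $a_2, b_2, c_2$ by $\mu_1^{-1}$, which is the crux of the identification $h(V) = \sigma_2^3 V$.
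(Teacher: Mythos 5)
Your proposal is correct and follows essentially the same route as the paper: iterate the explicit formula for $\sigma_2$ three times to get $(\Sigma; \{ a_1, \mu_1^{-1}(a_2) \}, \{ b_1, \mu_1^{-1}(b_2) \}, \{ c_1, \mu_1^{-1}(c_2) \})$, then observe that $\mu_1^{-1}$ is realized by a power of the Dehn twist along a curve disjoint from $a_1, b_1, c_1$, hence by an automorphism of $\Sigma$ fixing those curves. Your final step merely makes explicit what the paper leaves implicit in ``thus the assertion follows.''
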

\begin{proof}
	\[
	\begin{split}
		\sigma_2^3(\Sigma; \{ a_1, a_2 \}, \{ b_1, b_2 \}, \{ c_1, c_2 \}) &= \sigma_2^2(\Sigma; \{ a_1, b_2 \}, \{ b_1, \mu_1^{-1}(c_2) \}, \{ c_1, a_2 \} ).\\
		&= \sigma_2(\Sigma; \{ a_1, \mu_1^{-1}(c_2) \}, \{ b_1, \mu_1^{-1}(a_2) \}, \{ c_1, b_2 \} ).\\
		&= (\Sigma; \{ a_1, \mu_1^{-1}(a_2) \}, \{ b_1, \mu_1^{-1}(b_2) \}, \{ c_1, \mu_1^{-1}(c_2) \} ).
	\end{split}
	\]
As mentioned in Section 2, the monodromy $\mu_1$ is either the identity map, $t^{\pm 1}_d$ or $t^{\pm 4}_d$, and the core $d$ of the Dehn twist does not intersect $a_1, b_1$ and $c_1$.
Thus the assertion follows.
\end{proof}
Therefore, to prove Proposition~\ref{containOrder3}, it is enough to show that if $\mu_1$ is not the identity map, then $V, \sigma_2 V$ and $\sigma_2^2V$  are different modulo $\Mod(\Sigma)$ and upper-triangular handle-slides.
To do this, it is enough to show that if $\mu_1$ is not the identity map, then the configurations of vanishing cycles of $V, \sigma_2 V$ and $\sigma_2^2V$ are different modulo $\Mod(\Sigma_{a_1})$ after the surgery from $\Sigma$ to $\Sigma_{a_1}$.

We assign orientations to  $a_1, b_1$ and $c_1$ so that $a_1 \cdot b_1 = b_1 \cdot c_1 = c_1 \cdot a_1 = 1$ if $(a_1 \cdot b_1) (b_1 \cdot c_1) (c_1 \cdot a_1) = 1$, which is the case on the left in Figure~\ref{genus1bd3}, and $a_1 \cdot b_1 = b_1 \cdot c_1 = c_1 \cdot a_1 = -1$ if $(a_1 \cdot b_1) (b_1 \cdot c_1) (c_1 \cdot a_1) = -1$, which is the case on the right.
We assign orientations to $a_2, b_2$ and $c_2$ arbitrary.
Let $b_1'$ and $c_1'$ be the arcs on $\Sigma_{a_1}$ obtained from $b_1$ and $c_1$, respectively,  by the surgery from $\Sigma$ to $\Sigma_{a_1}$, where $a_1$ becomes two disks on $\Sigma_{a_1}$ by the surgery and $b_1'$ and $c_1'$ are arcs connecting these two disks.
Shrinking these two disks to points, we obtain an immersed curve on $\Sigma_{a_1}$ consisting of $b_1'$ and $c_1'$, which we denote by $b_1' + c_1'$. 
\begin{lem}\label{d}
	\begin{itemize}
		\item If $\Sigma$ is obtained from a regular neighborhood of $a_1 \cup b_1 \cup c_1$ by attaching two disks and one genus-$1$ surface with one boundary component, then $[b_1' + c_1'] = [d] = 0$ in $H_1(\Sigma_{a_1})$.
		\item If $\Sigma$ is obtained from a regular neighborhood of $a_1 \cup b_1 \cup c_1$ by attaching one disk and one annulus, then a simple closed curve representing the cycle $[b_1' + c_1'] \in H_1(\Sigma_{a_1})$ is parallel to $d$.
	\end{itemize}
\end{lem}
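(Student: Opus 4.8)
The plan is to turn the statement into a single computation in $H_1(\Sigma_{a_1})\cong\Z^2$ and then pin down one coefficient geometrically. The starting observation is that, although $b_1$ and $c_1$ each meet $a_1$ once and so do not survive the surgery individually, their sum does: with the orientations fixed before the lemma one has $b_1\cdot a_1=-1$ and $c_1\cdot a_1=+1$ (or the reverse, in the right-hand case of Figure~\ref{genus1bd3}), so $(b_1+c_1)\cdot a_1=0$. Hence $[b_1+c_1]$ lies in $a_1^{\perp}\subset H_1(\Sigma)$ and descends to a well-defined class in $H_1(\Sigma_{a_1})=a_1^{\perp}/\langle a_1\rangle$; by the very construction of the arcs $b_1'$ and $c_1'$, this descended class is exactly $[b_1'+c_1']$. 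Thus the whole lemma reduces to identifying this one homology class.

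Next I would compute $[b_1+c_1]$ on the closed genus-$2$ surface. In the annulus case, complete $\{a_1,b_1\}$ to a symplectic basis $\{\alpha_1=a_1,\ \beta_1=b_1,\ \alpha_2=d,\ \beta_2\}$, where $d$ is the core of the attaching annulus $A$ (so $d$ is disjoint from the neighborhood $N$ of $a_1\cup b_1\cup c_1$, and $\beta_2$ runs once over the corresponding handle). Using $a_1\cdot b_1=b_1\cdot c_1=c_1\cdot a_1=1$ together with $b_1\cdot d=c_1\cdot d=0$ (the latter because $b_1,c_1\subset N$ while $d\subset A$), every intersection number of $b_1+c_1$ with the basis is forced except one, and one gets
\[
[b_1+c_1]=-[a_1]+z\,[d],\qquad z:=(b_1+c_1)\cdot\beta_2=c_1\cdot\beta_2.
\]
Passing to $\Sigma_{a_1}$ kills $[a_1]$, so $[b_1'+c_1']=z\,[d]$. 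A simple closed curve carrying this class is parallel to $d$ precisely when $z=\pm1$; so the annulus case reduces to showing $z=\pm1$, and the other case to showing that the analogous coefficient vanishes.

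For the first case (two disks and a genus-$1$ piece) this is immediate. Surgering $N$ along $a_1$ turns the genus-$1$ surface $N$ with three boundary components into a genus-$0$ one, i.e.\ a pair of pants $N_{a_1}$, and $\Sigma_{a_1}$ is obtained from $N_{a_1}$ by the same capping. Each boundary circle $\delta_1,\delta_2,\delta_3$ then bounds a disk (or is separating) in the torus $\Sigma_{a_1}$, hence is null-homologous; this is exactly why $\mu_1$ is the identity in this case. Since $b_1'+c_1'$ is supported in $N_{a_1}$, its class is a combination of the $[\delta_i]$, so $[b_1'+c_1']=0$, and the corresponding $[d]$ is $0$ as well.

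The main obstacle is the sign count $z=\pm1$ in the annulus case, which is where the configuration of Figure~\ref{genus1bd3} genuinely enters. Here I would work in the pair of pants $N_{a_1}$: the arcs $b_1'$ and $c_1'$ both run between the two surgery disks and meet only at the single point $b_1\cap c_1$ (which lies away from $a_1$), so $b_1'+c_1'$ is an immersed loop with one double point, and the orientation-compatible resolution of that double point is a simple closed curve carrying the class $z[d]$. In a pair of pants every essential simple closed curve is boundary-parallel, so it suffices to check that this curve is parallel to one of the two boundary circles to which $A$ is attached (equivalently, that the dual arc $\beta_2\cap N$ joining those two circles crosses $c_1$ exactly once), rather than to the circle capped by a disk. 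This is read off from the explicit configuration of $a_1,b_1,c_1,\delta_1,\delta_2,\delta_3$ recorded in Figure~\ref{genus1bd3} and in the analysis behind Theorem~\ref{thmA}; it forces $z=\pm1$ and hence that a simple closed curve representing $[b_1'+c_1']$ is parallel to $d$, completing the proof.
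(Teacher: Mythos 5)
Your framework is sound and considerably more explicit than what the paper records: the paper's entire proof of Lemma~\ref{d} consists of the statement that the assertions "may be confirmed in Figure~\ref{genus1bd3}," together with the remark that $d$ is parallel to one of $\delta_1,\delta_2,\delta_3$. Your reduction is correct as far as it goes: $(b_1+c_1)\cdot a_1=0$ with the stated orientations, so $[b_1+c_1]$ descends to $H_1(\Sigma_{a_1})\cong a_1^{\perp}/\langle a_1\rangle$ and the descended class is $[b_1'+c_1']$; in the annulus case this class is $z[d]$ with $z=c_1\cdot\beta_2$; and the first bullet follows because $b_1'+c_1'$ is supported in the pair of pants $N_{a_1}$, whose boundary classes all vanish in $H_1(\Sigma_{a_1})$ when no annulus is attached.

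The soft spot is the final step. An oriented immersed loop with one transverse double point resolves, orientation-compatibly, into \emph{two} embedded loops, not one (a figure eight is the model case), so the argument "every essential simple closed curve in a pair of pants is boundary-parallel" only yields that $z[d]$ is a sum of two classes each lying in $\{0,\pm[d]\}$, i.e.\ $z\in\{0,\pm1,\pm2\}$. Ruling out $z=0$ and $z=\pm2$ is exactly the content of the second bullet, and at that point your proof defers to "the explicit configuration of Figure~\ref{genus1bd3}" --- which is precisely where the paper's own proof begins and ends. So the homological scaffolding is a useful consistency check but does not by itself close the argument. To make it self-contained, carry out the one concrete verification: draw $b_1'$ and $c_1'$ in the surgered neighborhood $N_{a_1}$ of Figure~\ref{genus1bd3}, resolve the single double point, observe that one of the two resulting loops is inessential and the other is parallel to one of the two boundary circles to which $A$ is attached; equivalently, exhibit $\beta_2$ explicitly and count $c_1\cdot\beta_2=\pm1$.
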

\begin{proof}
	We may confirm the assertions in Figure~\ref{genus1bd3}.
	Note that $d$ is parallel to one of $\delta_1, \delta_2$ and $\delta_3$ in the figure.
\end{proof}

We set
$$I(V) = \begin{pmatrix}
|(b_1 + c_1) \cdot a_2|\\
|(b_1 + c_1) \cdot b_2|\\
|(b_1 + c_1) \cdot c_2|\\
\end{pmatrix},$$
where $b_1 + c_1$ is the union of $b_1$ and $c_1$ on $\Sigma$.
For $w \in \{ a_2, b_2, c_2\}$, let $w'$ be the vanishing cycle on $\Sigma_{a_1}$ obtained from $w$ by the surgery from $\Sigma$ to $\Sigma_{a_1}$.
\begin{lem}\label{invariantForHS}
	If $V'$ is obtained from $V$ by upper-triangular handle-slides, then $I(V) = I(V')$. 
\end{lem}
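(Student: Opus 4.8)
The plan is to exploit that the algebraic intersection number on $\Sigma$ is a homological pairing, so each entry of $I(V)$ depends only on the homology classes of the curves involved. Since an upper-triangular handle-slide fixes $a_1, b_1, c_1$ and only slides $a_2, b_2, c_2$ over $a_1$, I would first record its effect on homology: a single slide of $w \in \{ a_2, b_2, c_2 \}$ over $a_1$ replaces $w$ by a band sum with a parallel copy of $a_1$, so that $[w'] = [w] \pm [a_1]$ in $H_1(\Sigma)$, the sign depending on the orientation of the slide. A general upper-triangular handle-slide is a finite composition of such moves, hence $[w'] = [w] + n[a_1]$ for some $n \in \Z$, while $[b_1]$ and $[c_1]$ are unchanged.

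Next I would compute the single pairing that controls everything, namely $(b_1 + c_1) \cdot a_1$. With the orientation convention fixed above, $a_1 \cdot b_1 = b_1 \cdot c_1 = c_1 \cdot a_1 = \epsilon$ for a common sign $\epsilon \in \{ -1, 1 \}$. By antisymmetry of the intersection pairing, $b_1 \cdot a_1 = -\epsilon$ and $c_1 \cdot a_1 = \epsilon$, so
$$(b_1 + c_1) \cdot a_1 = b_1 \cdot a_1 + c_1 \cdot a_1 = -\epsilon + \epsilon = 0.$$
Geometrically this just says that $a_1$ meets $b_1$ and $c_1$ with opposite signs, which is forced by the counter-clockwise triangle configuration of $a_1, b_1, c_1$ in Figure~\ref{genus1bd3}.

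Combining the two steps finishes the argument. For each $w \in \{ a_2, b_2, c_2 \}$ with image $w'$ under the handle-slide, bilinearity gives
$$(b_1 + c_1) \cdot w' = (b_1 + c_1) \cdot w + n\,\bigl( (b_1 + c_1) \cdot a_1 \bigr) = (b_1 + c_1) \cdot w,$$
so the three signed intersection numbers are unchanged; taking absolute values yields $I(V) = I(V')$. I expect no genuine obstacle here: the only delicate point is the orientation bookkeeping behind the vanishing $(b_1 + c_1) \cdot a_1 = 0$, and once that is in place the invariance is immediate from bilinearity of the intersection form.
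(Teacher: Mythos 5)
Your argument is correct, and it is a genuinely different (and more self-contained) route than the paper's. You work entirely on $\Sigma$: a slide of $w\in\{a_2,b_2,c_2\}$ over $a_1$ changes $[w]$ by a multiple of $[a_1]$, and the orientation convention $a_1\cdot b_1=b_1\cdot c_1=c_1\cdot a_1=\epsilon$ forces $(b_1+c_1)\cdot a_1=-\epsilon+\epsilon=0$, so bilinearity of the homological pairing kills the correction term. The paper instead passes to the surgered surface $\Sigma_{a_1}$: it identifies $(b_1+c_1)\cdot w$ with $(b_1'+c_1')\cdot w'$, invokes Lemma~\ref{d} to replace $[b_1'+c_1']$ by $[d]$, and then observes that an upper-triangular handle-slide becomes an isotopy of $w'$ through the surgery disks on $\Sigma_{a_1}$, so $|d\cdot w'|$ is unchanged. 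Your version is more elementary in that it needs neither Lemma~\ref{d} nor the surgery picture; the paper's version buys the identification $|(b_1+c_1)\cdot w|=|d\cdot w'|$, which is exactly the form in which $I(V)$ is computed in the case analysis of Lemma~\ref{except}, so the two steps are packaged together there. One small point worth making explicit in your write-up: the entries of $I(V)$ are algebraic intersection numbers (this is why the absolute values appear and why the later computations are done with homology classes), which is what licenses your purely homological bookkeeping; with that understood, your proof is complete.
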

\begin{proof}
	Since $(b_1 + c_1) \cdot w = (b'_1 + c'_1) \cdot w'$ for $w = a_2, b_2, c_2$, we have $|(b_1 + c_1) \cdot w| = |d \cdot w'|$ by 			Lemma~\ref{d}.
	Since $|d \cdot w'|$ is invariant under upper-triangular handle-slides of $w$, the assertion follows.
\end{proof}
\begin{lem}
$$I(\sigma_2^{n} V) = \begin{pmatrix}
						|(b_1 + c_1) \cdot \sigma^{n} a_2|\\
						|(b_1 + c_1) \cdot \sigma^{n} b_2|\\
						|(b_1 + c_1) \cdot \sigma^{n} c_2|\\
						\end{pmatrix},$$ where $\sigma$ is the cyclic permutation that sends $(a_2, b_2, c_2)$ to $(b_2, c_2, a_2)$. 
\end{lem}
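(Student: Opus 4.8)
The plan is to prove the identity by induction on $n$, reducing the whole statement to a single invariance property of the pairing $(b_1+c_1)\cdot(-)$ under the map $\mu_1^{-1}$. The point I would establish first is the \emph{key claim}: for every simple closed curve $w$ on $\Sigma$ one has the equality of algebraic intersection numbers $(b_1+c_1)\cdot\mu_1^{-1}(w) = (b_1+c_1)\cdot w$. Granting this, the $\mu_1^{-1}$ factors that $\sigma_2$ keeps inserting become invisible to $I$, so on the level of $I$-values $\sigma_2$ acts exactly as the cyclic permutation $\sigma$, which is precisely what the lemma asserts.

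To prove the key claim I would argue as follows. If $\mu_1$ is the identity the claim is trivial. Otherwise, by the definition of $\mu_1^{-1}$ recalled in Section~2, we have $\mu_1^{-1}(w)=t_{\delta_j}^{k}(w)$ for some $k\in\{\pm1,\pm4\}$, where $\delta_j$ is a boundary component of a regular neighbourhood of $a_1\cup b_1\cup c_1$ (see Lemma~\ref{monodromy}). Since such a $\delta_j$ is disjoint from $b_1$ and from $c_1$, we get $(b_1+c_1)\cdot\delta_j=0$, and the homological Dehn-twist formula $[t_{\delta_j}^{k}(w)]=[w]+k\,([w]\cdot[\delta_j])\,[\delta_j]$ in $H_1(\Sigma)$ then gives $(b_1+c_1)\cdot t_{\delta_j}^{k}(w)=(b_1+c_1)\cdot w$. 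This is the same mechanism already used in the proof of Lemma~\ref{invariantForHS}, with $\delta_j$ playing the role of $d$; indeed, by Lemma~\ref{d} the class $[b_1'+c_1']$ is that of $d$, which is parallel to $\delta_j$.

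For the induction, the base case $n=0$ is the definition of $I(V)$ together with $\sigma^{0}=\mathrm{id}$. For the inductive step I would write $\sigma_2^{n}V=(\Sigma;\{a_1,A\},\{b_1,B\},\{c_1,C\})$, apply the formula for $\sigma_2$ recorded at the end of Section~3 to obtain $\sigma_2^{n+1}V=(\Sigma;\{a_1,B\},\{b_1,\mu_1^{-1}(C)\},\{c_1,A\})$, and read off $I$ as the triple $(\,|(b_1+c_1)\cdot B|,\ |(b_1+c_1)\cdot\mu_1^{-1}(C)|,\ |(b_1+c_1)\cdot A|\,)$. The key claim deletes the $\mu_1^{-1}$ in the middle entry, turning it into $|(b_1+c_1)\cdot C|$. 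Substituting the inductive hypothesis $|(b_1+c_1)\cdot A|=|(b_1+c_1)\cdot\sigma^{n}a_2|$, $|(b_1+c_1)\cdot B|=|(b_1+c_1)\cdot\sigma^{n}b_2|$, $|(b_1+c_1)\cdot C|=|(b_1+c_1)\cdot\sigma^{n}c_2|$ and using $\sigma^{n+1}a_2=\sigma^{n}b_2$, $\sigma^{n+1}b_2=\sigma^{n}c_2$, $\sigma^{n+1}c_2=\sigma^{n}a_2$ then yields the formula for $n+1$.

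The only genuine content lies in the invariance claim; the rest is bookkeeping of the three-cycle. The subtlety to get right is that each application of $\sigma_2$ introduces a fresh $\mu_1^{-1}$, so $\sigma_2^{n}V$ actually carries accumulated powers of $\mu_1^{-1}$ in its second-index curves, as one sees explicitly in the computation inside the proof of Lemma~\ref{order3}. I expect that tracking these accumulated monodromy factors and confirming that all of them drop out under $I$, rather than any geometric difficulty, will be the main thing to verify carefully.
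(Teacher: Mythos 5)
Your proposal is correct and follows essentially the same route as the paper: an induction on $n$ whose only substantive input is that the twisting curve of $\mu_1^{-1}$ (one of the $\delta_j$ in Lemma~\ref{monodromy}) is disjoint from $b_1$ and $c_1$, so the inserted monodromy factors are invisible to $I$. Your isolation of this as an explicit key claim via the homological Dehn-twist formula is a slightly cleaner packaging of the same argument the paper runs through its three cases mod $3$.
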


\begin{proof}
		As mentioned in Section $3$,
						$$\sigma_2(\Sigma; \{ a_1, a_2 \}, \{ b_1, b_2 \}, \{ c_1, c_2 \}) = (\Sigma; \{ a_1, b_2 \}, \{ b_1, \mu_1^{-1}(c_2) \}, \{ c_1, a_2 \}).$$ When $n = 1$ we have
		$$I(\sigma_2 V) = \begin{pmatrix}
						|(b_1 + c_1) \cdot b_2|\\
						|(b_1 + c_1) \cdot \mu_1^{-1}(c_2)|\\
						|(b_1 + c_1) \cdot a_2|\\
						\end{pmatrix}
						= \begin{pmatrix}
						|(b_1 + c_1) \cdot b_2|\\
						|(b_1 + c_1) \cdot c_2|\\
						|(b_1 + c_1) \cdot a_2|\\						
						\end{pmatrix}
						= \begin{pmatrix}
						|(b_1 + c_1) \cdot \sigma a_2|\\
						|(b_1 + c_1) \cdot \sigma b_2|\\
						|(b_1 + c_1) \cdot \sigma c_2|\\
						\end{pmatrix},$$where we used the fact that $\delta_1, \delta_2$ and $\delta_3$ in Figure~\ref{genus1bd3}, some of which corresponds to the core of the Dehn twist of $\mu_1$ on $\Sigma_{a_1}$, do not intersect $b_1$ and $c_1$.
						Remark that we do not need to care the orientations of $a_2, b_2$ and $c_2$ because $\sigma_2$ does not change $a_1, b_1, c_1$ and the entries in the definition of $I(V)$ are absolute values.
						For induction, we assume that the following equality holds in the case $n = k$:
						\begin{align*}
						I(\sigma_2^{k} V) =
						\left\{
						\begin{array}{l}
						\vspace{3mm}
						\begin{pmatrix}
						|(b_1 + c_1) \cdot a_2|\\
						|(b_1 + c_1) \cdot b_2|\\
						|(b_1 + c_1) \cdot c_2|\\
						\end{pmatrix} (k \equiv 0 \ \mathrm{mod} \ 3)\\
						\vspace{3mm}			
						\begin{pmatrix}
						|(b_1 + c_1) \cdot b_2|\\
						|(b_1 + c_1) \cdot c_2|\\
						|(b_1 + c_1) \cdot a_2|\\
						\end{pmatrix}(k \equiv 1 \ \mathrm{mod} \ 3)\\
						\begin{pmatrix}
						|(b_1 + c_1) \cdot c_2|\\
						|(b_1 + c_1) \cdot a_2|\\
						|(b_1 + c_1) \cdot b_2|\\
						\end{pmatrix}(k \equiv 2 \ \mathrm{mod} \ 3).
						\end{array}
						\right.
						\end{align*}
						The image of $\sigma_2$ is obtained by exchanging the second entries of three brackets in $(\Sigma; \{ a_1, a_2 \}, \{ b_1, b_2 \}, \{ c_1, c_2 \})$ by the cyclic permutation $\sigma$ and then applying $\mu_1^{-1}$ to the second entry in the second bracket.
						Here we again used the fact that $\delta_1, \delta_2$ and $\delta_3$ do not intersect $b_1$ and $c_1$.
						Thus we have
						\begin{align*}
						I(\sigma_2^{k + 1} V) =
						\left\{
						\begin{array}{l}
						\vspace{3mm}
						\begin{pmatrix}
						|(b_1 + c_1) \cdot b_2|\\
						|(b_1 + c_1) \cdot c_2|\\
						|(b_1 + c_1) \cdot a_2|\\
						\end{pmatrix} (k \equiv 0 \ \mathrm{mod} \ 3)\\
						\vspace{3mm}			
						\begin{pmatrix}
						|(b_1 + c_1) \cdot c_2|\\
						|(b_1 + c_1) \cdot a_2|\\
						|(b_1 + c_1) \cdot b_2|\\
						\end{pmatrix}(k \equiv 1 \ \mathrm{mod} \ 3)\\
						\begin{pmatrix}
						|(b_1 + c_1) \cdot a_2|\\
						|(b_1 + c_1) \cdot b_2|\\
						|(b_1 + c_1) \cdot c_2|\\
						\end{pmatrix}(k \equiv 2 \ \mathrm{mod} \ 3)
						\end{array}
						\right.
						= \begin{pmatrix}
						|(b_1 + c_1) \cdot \sigma^{k + 1} a_2|\\
						|(b_1 + c_1) \cdot \sigma^{k + 1} b_2|\\
						|(b_1 + c_1) \cdot \sigma^{k + 1} c_2|\\
						\end{pmatrix}.						
						\end{align*}
						This completes the proof.
\end{proof}

\begin{lem}\label{except}
	Let $V$ be a simplified trisection diagram of a simplified $(2, 0)$-trisection. 
	\begin{itemize}
		\item[(1)] $I(\sigma_1V) = I(V)$.
		\item[(2)] $I(\sigma_2V) \neq I(V)$ unless either $b'_2$ and $c'_2$ are parallel or $a'_2$ and $\mu_1^{-1}(c'_2)$ are parallel.
	\end{itemize}
\end{lem}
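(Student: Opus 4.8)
The plan is to handle the two parts separately, reducing both to algebraic intersection numbers with the core curve $d$ of $\mu_1$ on the torus $\Sigma_{a_1}$.

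For (1), recall that $\sigma_1$ relabels $(a_1,b_1,c_1)$ as $(b_1,c_1,a_1)$ and replaces each of $a_2,b_2,c_2$ by its image under $t_e$, where $e=t_{a_1}(b_1)$. Hence the curve playing the role of $b_1+c_1$ for $\sigma_1V$ is $c_1+a_1$, and the $w$-entry of $I(\sigma_1V)$ equals $|(c_1+a_1)\cdot t_e(w)|=|t_e^{-1}(c_1+a_1)\cdot w|$. I would then compute $[t_e^{-1}(c_1+a_1)]$ in $H_1(\Sigma)$: with the orientations of Lemma~\ref{monodromy} giving $a_1\cdot b_1=b_1\cdot c_1=c_1\cdot a_1=1$ one has $[e]=[b_1]-[a_1]$ and $(c_1+a_1)\cdot e=-1$, so $t_e^{-1}(c_1+a_1)=b_1+c_1$ and therefore $I(\sigma_1V)=I(V)$ entry by entry. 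Conceptually this is because the $\Delta_1$-move is realized by a self-diffeomorphism of $\Sigma_{a_1}$ that twists along (the image of) $e$, a curve lying in a neighborhood of $a_1\cup b_1$ and hence disjoint from $d$; such a map fixes $d$ and cannot change the numbers $|d\cdot w'|$. The opposite orientation case differs only by a multiple of $[a_1]$, which is invisible to $I$ since, by Lemma~\ref{d}, $I$ records only the image of $b_1+c_1$ after surgering along $a_1$, where $[a_1]=0$.

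For (2), first reduce $I$ to intersection with $d$. By Lemma~\ref{d} the union $b_1'+c_1'$ represents $[d]$ in $H_1(\Sigma_{a_1})$, so $I(V)_w=|(b_1+c_1)\cdot w|=|d\cdot w'|$. Because $\mu_1=t_d^{\pm 1}$ or $t_d^{\pm 4}$ fixes $d$ and preserves algebraic intersection with it, we have $|d\cdot(\mu_1^{-1}(c_2))'|=|d\cdot c_2'|$; feeding the formula $\sigma_2(a_2,b_2,c_2)=(b_2,\mu_1^{-1}(c_2),a_2)$ into $I$ then shows that $I(\sigma_2V)=(|d\cdot b_2'|,|d\cdot c_2'|,|d\cdot a_2'|)$ is the cyclic shift of $I(V)=(|d\cdot a_2'|,|d\cdot b_2'|,|d\cdot c_2'|)$. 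Consequently $I(\sigma_2V)=I(V)$ holds if and only if the three numbers $|d\cdot a_2'|$, $|d\cdot b_2'|$, $|d\cdot c_2'|$ all coincide, and it remains to prove that such a coincidence forces $b_2'$ and $c_2'$ to be parallel or $a_2'$ and $\mu_1^{-1}(c_2')$ to be parallel.

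To finish I would pass to the torus $\Sigma_{a_1}$, where, since $\mu_1\neq\mathrm{id}$, the class $[d]$ is primitive; choosing a basis with $[d]=(1,0)$ turns $|d\cdot w'|$ into the absolute value of the second coordinate of $[w']$, so the condition becomes that $[a_2'],[b_2'],[c_2']$ have second coordinates of equal modulus. The two parallelism conditions are exactly the degeneracies that make two of these moduli agree automatically (for $a_2'\parallel\mu_1^{-1}(c_2')$ via $|d\cdot a_2'|=|d\cdot\mu_1^{-1}(c_2')|=|d\cdot c_2'|$), and I would verify, by running through the explicit configurations of $a_2',b_2',c_2'$ on $\Sigma_{a_1}$ recorded in the proof of Theorem~\ref{thmA}, that outside these degeneracies the three moduli cannot all coincide. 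I expect this last step to be the main obstacle: on an arbitrary torus three simple closed curves satisfy no relation forcing the conclusion, so one genuinely needs the restricted list of admissible configurations from Theorem~\ref{thmA}. The expected mechanism is that in each admissible configuration the three cusp cycles, corrected by $\mu_1$, obey intersection-one (Farey-type) relations whose sum-zero consequence precludes all three moduli being equal and nonzero; the remaining bookkeeping—orientations and the subcases $\mu_1=t_d^{\pm1}$ and $\mu_1=t_d^{\pm4}$—is routine once those configurations are in hand.
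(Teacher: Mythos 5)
Your argument follows the paper's proof in all essentials: part (1) is the same homological Dehn--twist computation (merely transposed onto $t_e^{-1}(c_1+a_1)$ instead of $t_e(w)$, with the same use of $a_1\cdot w=0$ to kill the extra multiple of $[a_1]$ in the opposite-orientation case), and part (2) reduces, exactly as the paper does via Lemma~\ref{d} and the cyclic-shift formula for $I(\sigma_2 V)$, to checking that $(|d\cdot a_2'|,|d\cdot b_2'|,|d\cdot c_2'|)$ is not a constant triple outside the two parallelism degeneracies by consulting the explicit configurations from Theorem~\ref{thmA}. Note only that this final enumeration is where all the content of (2) lives and the paper carries it out by directly computing $I(V)$ and $I(\sigma_2 V)$ in each of the five admissible configurations (rather than via any Farey-type relation), so that case check still has to be written out to complete your proof.
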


\begin{proof}
First we prove (1). As the mentioned in Section 3, $\sigma_1$ satisfies
\[
\sigma_1(\Sigma; \{ a_1, a_2 \}, \{ b_1, b_2 \}, \{ c_1, c_2 \}) = (\Sigma; \{ b_1, t_{t_{a_1}(b_1)}(a_2) \}, \{ c_1, t_{t_{a_1}(b_1)}(b_2) \}, \{ a_1, t_{t_{a_1}(b_1)}(c_2) \}).
\]
If $a_1 \cdot b_1 = b_1 \cdot c_1 = c_1 \cdot a_1 = 1$, 
then we have 
\[
\begin{split}
\sigma_1b_1 &= t_{t_{b_1}(c_1)}(b_1) = b_1 - (b_1 \cdot (c_1 - (c_1 \cdot b_1)b_1))(c_1 - (c_1 \cdot b_1)b_1) = -c_1\\
\sigma_1c_1 &= t_{t_{c_1}(a_1)}(c_1) = c_1 - (c_1 \cdot (a_1 - (a_1 \cdot c_1)c_1))(a_1 - (a_1 \cdot c_1)c_1) = -a_1.
\end{split}
\]
We also have
\[
\begin{split}
c_1 \cdot t_{t_{a_1}(b_1)}(a_2) &= c_1 \cdot (a_2 - (a_2 \cdot (b_1 - (b_1 \cdot a_1)a_1))(b_1 - (b_1 \cdot a_1)a_1))\\
							&= c_1\cdot  a_2 - (a_2 \cdot b_1) (c_1\cdot  b_1) + (b_1\cdot  a_1)(a_2 \cdot b_1)(c_1 \cdot a_1)\\
							&= c_1 \cdot a_2
\end{split}
\] 
and
\[
\begin{split}
a_1 \cdot t_{t_{a_1}(b_1)}(a_2) &= a_1 \cdot (a_2 - (a_2 \cdot (b_1 - (b_1 \cdot a_1)a_1))(b_1 - (b_1 \cdot a_1)a_1))\\
							&= a_1\cdot  a_2 - (a_2 \cdot b_1) (a_1\cdot  b_1) + (b_1\cdot  a_1)(a_2 \cdot b_1)(a_1 \cdot a_1)\\
							&= b_1 \cdot a_2
\end{split}
\] 
and similarly
\[
\begin{split}
c_1 \cdot t_{t_{a_1}(b_1)}(b_2) &= c_1 \cdot b_2\\
a_1 \cdot t_{t_{a_1}(b_1)}(b_2) &=  b_1 \cdot b_2\\
c_1 \cdot t_{t_{a_1}(b_1)}(c_2) &= c_1 \cdot c_2\\
a_1 \cdot t_{t_{a_1}(b_1)}(c_2) &= b_1 \cdot c_2.
\end{split}
\]
Here we used the fact that $a_1$ does not intersect $a_2, b_2$ and $c_2$.
Thus we have
		$$I(\sigma_1V) = 
		\begin{pmatrix}
		|(c_1 + a_1) \cdot t_{t_{a_1}(b_1)}(a_2)|\\
		|(c_1 + a_1) \cdot t_{t_{a_1}(b_1)}(b_2)|\\
		|(c_1 + a_1) \cdot t_{t_{a_1}(b_1)}(c_2)|\\						
		\end{pmatrix} =
		\begin{pmatrix}
		|(c_1 + b_1) \cdot a_2|\\
		|(c_1 + b_1) \cdot b_2|\\
		|(c_1 + b_1) \cdot c_2|\\						
		\end{pmatrix} = 
		I(V).$$
If $a_1 \cdot b_1 = b_1 \cdot c_1 = c_1 \cdot a_1 = -1$, by similar caluculation, 
we have
$\sigma_1 b_1 = c_1, \sigma_1 c_1 = a_1$ and  
\[
\begin{split}
c_1 \cdot t_{t_{a_1}(b_1)}(a_2) &= c_1 \cdot a_2 + 2 (b_1 \cdot a_2)\\
a_1 \cdot t_{t_{a_1}(b_1)}(a_2) &= - b_1 \cdot a_2\\
c_1 \cdot t_{t_{a_1}(b_1)}(b_2) &= c_1 \cdot b_2 + 2 (b_1 \cdot b_2)\\
a_1 \cdot t_{t_{a_1}(b_1)}(b_2) &=  - b_1 \cdot b_2\\
c_1 \cdot t_{t_{a_1}(b_1)}(c_2) &= c_1 \cdot b_2 + 2 (b_1 \cdot c_2)\\
a_1 \cdot t_{t_{a_1}(b_1)}(c_2) &= - b_1 \cdot c_2.
\end{split}
\]
Thus $I(\sigma_1V) =  I(V)$ again holds.

Next we prove (2).
		In the following proofs, the notation of double sign correspondence is used.
		Suppose that the $6$-tuple is 
		$\begin{pmatrix}
		S^3 & S^3 & L((q - 1)^2, \epsilon q)\\
		S^1 \times S^2 & L(q - 2, \epsilon) & L(q, -\epsilon)
		\end{pmatrix}$ in the five cases in Theorem~\ref{thmA}.
		In this case, $\mu_1 = t_d^{\pm1}$.
		From \cite{asano}, we can assume that the configurations of vanishing cycles on $\Sigma_{a_1}$ are
		\[
		[a'_2] = \begin{pmatrix} 1 \\ 0 \end{pmatrix},\quad
		[b'_2] = \begin{pmatrix} 0 \\ 1 \end{pmatrix},\quad
		[c'_2] = \begin{pmatrix} \pm q - 2 \\ \pm 1 \end{pmatrix},\quad
		[d] = \begin{pmatrix} \mp 1 \\  1 \end{pmatrix}, 
		\]where $p, q \in \mathbf{Z}$. 
		Then, by Lemma~\ref{d}, we have
		$$I(V) = 
		\begin{pmatrix}
		|(b_1 + c_1) \cdot a_2|\\
		|(b_1 + c_1) \cdot b_2|\\
		|(b_1 + c_1) \cdot c_2|\\
		\end{pmatrix} = 
		\begin{pmatrix}
		1\\
		1\\
		|1 \mp q|\\
		\end{pmatrix},\; 
		I(\sigma_2V) = 
		\begin{pmatrix}
		|(b_1 + c_1) \cdot b_2|\\
		|(b_1 + c_1) \cdot c_2|\\
		|(b_1 + c_1) \cdot a_2|\\
		\end{pmatrix} = 
		\begin{pmatrix}
		1\\
		|1 \mp q|\\
		1\\
		\end{pmatrix}.$$
		Thus we obtain $I(\sigma_2V) \neq  I(V)$ except for the case $|1 \mp q| = 1$.
		These exceptional cases are excluded in the assertion.
		
		Suppose that the $6$-tuple is 
		$\begin{pmatrix}
		S^3 & L(9, 2\epsilon) & L(4, \epsilon)\\
		L(2, 1) & L(5, \epsilon) & S^3
		\end{pmatrix}$.
		In this case, $\mu_1 = t_d^{\pm1}$.
		From \cite{asano}, we can assume that the configurations of vanishing cycles are
		\[
		[a'_2] = \begin{pmatrix} 1 \\ 0 \end{pmatrix},\quad
		[b'_2] = \begin{pmatrix} 0 \\ 1 \end{pmatrix},\quad
		[c'_2] = \begin{pmatrix} 5 \\ \mp1 \end{pmatrix},\quad
		[d] = \begin{pmatrix} r \\  1 \end{pmatrix}, 
		\]where $r = \mp 2$ or $\mp 3$.
		Then we have
		$$I(V) = 
		\begin{pmatrix}
		|(b_1 + c_1) \cdot a_2|\\
		|(b_1 + c_1) \cdot b_2|\\
		|(b_1 + c_1) \cdot c_2|\\
		\end{pmatrix} = 
		\begin{pmatrix}
		1\\
		|r| \\
		5 - |r|\\
		\end{pmatrix}, \;
		I(\sigma_2V) = 
		\begin{pmatrix}
		|(b_1 + c_1) \cdot b_2|\\
		|(b_1 + c_1) \cdot c_2|\\
		|(b_1 + c_1) \cdot a_2|\\
		\end{pmatrix} = 
		\begin{pmatrix}
		|r|\\
		5 - |r|\\
		1\\
		\end{pmatrix}.$$
		Thus we obtain $I(\sigma_2V) \neq I(V)$.
			
		Suppose that the $6$-tuple is 
		$\begin{pmatrix}
		S^1 \times S^2 & L(4, 1) & L(4, 1)\\
		S^3 & L(4 + \epsilon, 1) & S^3
		\end{pmatrix}$.
		In this case, $\mu_1 = t_d^{\pm4}$.		
		From \cite{asano}, we can assume that the configurations of vanishing cycles are
		\[
		[a'_2] = \begin{pmatrix} 1 \\ 0 \end{pmatrix},\quad
		[b'_2] = \begin{pmatrix} 0 \\ 1 \end{pmatrix},\quad
		[c'_2] = \begin{pmatrix} -1 \pm 4 \epsilon_2 \\ \epsilon_2 \end{pmatrix},\quad
		[d] = \begin{pmatrix} 1 \\  0 \end{pmatrix}, 
		\]where $\epsilon_2 \in \{ 1, -1 \}$. 
		Then we have
		$$I(V) = 
		\begin{pmatrix}
		|(b_1 + c_1) \cdot a_2|\\
		|(b_1 + c_1) \cdot b_2|\\
		|(b_1 + c_1) \cdot c_2|\\
		\end{pmatrix} = 
		\begin{pmatrix}
		0\\
		1\\
		1\\
		\end{pmatrix}, \;
		I(\sigma_2V) = 
		\begin{pmatrix}
		|(b_1 + c_1) \cdot b_2|\\
		|(b_1 + c_1) \cdot c_2|\\
		|(b_1 + c_1) \cdot a_2|\\
		\end{pmatrix} = 
		\begin{pmatrix}
		1\\
		1\\
		0\\
		\end{pmatrix}.$$
		Thus we obtain $I(\sigma_2V) \neq I(V)$.
		
		Suppose that the $6$-tuple is
		$\begin{pmatrix}
		S^1 \times S^2 & S^3 & S^3\\
		S^3 & S^1 \times S^2 & S^3
		\end{pmatrix}$.
		In this case, $\mu_1 = t_d^{\pm1}$.
		From \cite{asano}, we can assume that the configurations of vanishing cycles are
		\[
		[a'_2] = \begin{pmatrix} 1 \\ 0 \end{pmatrix},\quad
		[b'_2] = \begin{pmatrix} 0 \\ 1 \end{pmatrix},\quad
		[c'_2] = \begin{pmatrix} 0 \\ \pm 1 \end{pmatrix},\quad
		[d] = \begin{pmatrix} 1 \\  0 \end{pmatrix}, 
		\]
		Since $b'_2$ and $c'_2$ are parallel, this case is excluded.
		Note that, even though this case is excluded, we have $I(\sigma_2V) \neq I(V)$ since
		$$I(\Sigma) = 
		\begin{pmatrix}
		|(b_1 + c_1) \cdot a_2|\\
		|(b_1 + c_1) \cdot b_2|\\
		|(b_1 + c_1) \cdot c_2|\\
		\end{pmatrix} = 
		\begin{pmatrix}
		0\\
		1\\
		1\\
		\end{pmatrix}, \;\;
		I(\sigma_2V) = 
		\begin{pmatrix}
		|(b_1 + c_1) \cdot b_2|\\
		|(b_1 + c_1) \cdot c_2|\\
		|(b_1 + c_1) \cdot a_2|\\
		\end{pmatrix} = 
		\begin{pmatrix}
		1\\
		1\\
		0\\
		\end{pmatrix}.$$
		
		Suppose that the $6$-tuple is
		$\begin{pmatrix}
		S^1 \times S^2 & S^3 & S^3\\
		S^3 & L(2, 1) & S^3
		\end{pmatrix}$.
		In this case, $\mu_1 = t_d^{\pm1}$.
		From \cite{asano}, we can assume that the configurations of vanishing cycles are
		\[
		[a'_2] = \begin{pmatrix} 1 \\ 0 \end{pmatrix},\quad
		[b'_2] = \begin{pmatrix} 0 \\ 1 \end{pmatrix},\quad
		[c'_2] = \begin{pmatrix} -2 \\ \mp 1 \end{pmatrix},\quad
		[d] = \begin{pmatrix} 1 \\  0 \end{pmatrix}. 
		\] 
		Then we have
		$$I(V) = 
		\begin{pmatrix}
		|(b_1 + c_1) \cdot a_2|\\
		|(b_1 + c_1) \cdot b_2|\\
		|(b_1 + c_1) \cdot c_2|\\
		\end{pmatrix} = 
		\begin{pmatrix}
		0\\
		1\\
		1\\
		\end{pmatrix}, \;
		I(\sigma_2V) = 
		\begin{pmatrix}
		|(b_1 + c_1) \cdot b_2|\\
		|(b_1 + c_1) \cdot c_2|\\
		|(b_1 + c_1) \cdot a_2|\\
		\end{pmatrix} = 
		\begin{pmatrix}
		1\\
		1\\
		0\\
		\end{pmatrix}.$$ Thus we obtain $I(\sigma_2V) \neq I(V)$.
		This completes the proof.
\end{proof}


\begin{proof}[\rm{{\bf Proof of Proposition \ref{containOrder3}.}}]
If $f$ is a not in the exceptional cases in Lemma~\ref{except}, then it satisfies
\[
I(V) \neq I(\sigma_2 V) \neq I(\sigma_2^2 V).
\]
Hence, with Lemma~\ref{order3}, we obtain $\langle \sigma_2 \rangle \simeq \Z / 3\Z \subset G_f$.
\end{proof}

\begin{proof}[\rm{{\bf Proof of Thorem 1}}]
Let $f : X \to \R^2$ be a simplified $(2, 0)$-trisection and $\Sigma$ be its simplified trisection diagram.
Let $f'$ and $f''$ be simplified $(2, 0)$-trisections obtained from $f$ by applying one $\Delta_1$-move and two $\Delta_1$-moves, respectively. 
In particular, $f, f'$ and $f''$ are right-left equivalent.
However, as mentioned in the proof of Proposition~\ref{containOrder3}, $I(V) \neq I(\sigma_2 V) \neq I(\sigma_2^2 V)$ holds unless $f$ is in the exceptional cases in the assertion.
Hence, by Lemma~\ref{invariantForHS}, $V, \sigma_2 V$ and $\sigma^2_2 V$ are not related by automorphisms of $\Sigma$ and upper-triangular handle-slides.
\end{proof}


\end{document}